\def\Rc{\mathrel{\mathscr{R}}}
\def\Lc{\mathrel{\mathscr{L}}}
\def\Jc{\mathrel{\mathscr{J}}}
\newcommand{\sgp}{semi\-group}
\newcommand{\sgps}{semi\-groups}
\newcommand{\is}{involutory semi\-group}
\newcommand{\iss}{involutory semi\-groups}
\newcommand{\fis}{finite involutory semi\-group}
\newcommand{\fss}{finite \sgps}
\newcommand{\fus}{finite unary \sgp}
\newcommand{\fuss}{finite unary \sgps}
\newcommand{\fb}{finitely based}
\newcommand{\fbp}{finite basis problem}
\newcommand{\fg}{finitely generated}
\newcommand{\nfb}{non\-finitely based}
\newcommand{\infb}{inherently non\-finitely based}
\newcommand{\TSL}{\ensuremath{\mathcal{T\kern-.5pt S\kern-.5pt L}}}
\newcommand{\TB}{\ensuremath{\mathcal{T\kern-.5pt B}_2^1}}
\newcommand{\FI}{\ensuremath{\mathcal{F\kern-.3pt I}}}
\newtheorem{Thm}{Theorem}[section]
\newtheorem{Prop}[Thm]{Proposition}
\newtheorem{Lemma}[Thm]{Lemma}
\newtheorem{Cor}[Thm]{Corollary}
\DeclareMathOperator{\var}{\mathsf{var}}
\begin{document}

\title{Unary enhancements\\ of inherently non-finitely based semigroups}
\author{K.~Auinger}
\address{Fakult\"at f\"ur Mathematik, Universit\"at Wien, Nordbergstrasse 15,  A-1090 Wien, Austria}
\email{karl.auinger@univie.ac.at}

\author{I.~Dolinka}
\address{ Department of Mathematics and Informatics, University of Novi Sad, Trg Do\-si\-teja Obradovi\'ca 4, 21101 Novi Sad, Serbia}
\email{dockie@dmi.uns.ac.rs}

\author{T.~V.~Pervukhina}
\address{Institute of Mathematics and Computer Science, Ural Federal University, Lenina 51, 620000 Ekaterinburg, Russia}
\email{cristofory@gmail.com, mikhail.volkov@usu.ru}

\author{M.~V.~Volkov}

\maketitle

\begin{abstract}
This paper is a continuation of ~\cite{ADV:2012}, more precisely, of Subsection 2.2 of~\cite{ADV:2012} dealing with \infb\ \iss. We exhibit
a simple condition under which a finite \is\ whose \sgp\ reduct is \infb\ is also \infb\ as a unary \sgp. As applications, we get already
known as well as new examples of \infb\ \iss. We also show that for finite regular semigroups, our condition is not only sufficient but
also necessary for the property of being \infb\ to persist. This leads to an algorithmic description of regular \infb\ \iss.
\end{abstract}

\section{Background and overview}

The \fbp, that is, the problem of classifying \sgps\ according to the finite basability of their identities, has been intensively explored
since the 1960s. Since the 1970s, the same problem has become investigated for \sgps\ endowed with an additional unary operation $x\mapsto
x^*$; such structures are commonly called \emph{unary \sgps}.

If $\mathcal{S}=\langle S,\cdot,{}^*\rangle$ is a unary \sgp, then the (plain) \sgp\ $\langle S,\cdot\rangle$ is called the (\emph{\sgp})
\emph{reduct} of $\mathcal{S}$. It is quite natural to ask how the answer to the \fbp\ for a given unary \sgp\ relates to the finite
basability of the identities of its reduct. The question turns out to be somewhat delicate. On the one hand, when we enhance the vocabulary
of an equational language by adding a unary operation, the expressive power of the language increases. Hence $\mathcal{S}$ usually has more
identities than $\langle S,\cdot\rangle$ so that the former may have more chance to become \nfb. On the other hand, the inference power of
the language increases too. Hence one can imagine the situation when some identity of $\langle S,\cdot\rangle$  does not follow from an
identity system $\Sigma$ as a plain identity but follows from $\Sigma$ when treated as a unary identity. This indicates that $\mathcal{S}$
may be \fb\ even if $\langle S,\cdot\rangle$ is not. The cumulative effect of the trade-off between increased expressivity and increased
inference power is hard to predict in general, and both possible outcomes indeed occur. This means that there exist unary \sgps, even
groups $\mathcal{G}=\langle G;\cdot,{}^{-1}\rangle$ with inversion as the unary operation, such that $\mathcal{G}$ is \fb\ [\nfb] as a
group while its reduct $\langle G;\cdot\rangle$ is \nfb\ [respectively, \fb] as a plain \sgp. See \cite[Section~2]{Volkov:2001} for
concrete examples (known since the 1970s), references and a discussion.

Much attention has been paid to the restriction of the \fbp\ to the class of \fss, in the plain as well as the unary setting, see, e.g.,
the survey \cite{Volkov:2001}. Therefore it appears a bit surprising that the above question about the relation between the finite
basability of a unary \sgp\ and of its reduct has not been systematically explored in the realm of \fss. To the best of our knowledge, the
first example of a \nfb\ \fus\ whose reduct is \fb\ was constructed only in~1998, see~\cite{Lawrence&Willard:1998}. The unary operation
used in~\cite{Lawrence&Willard:1998} was rather ad hoc, and similar examples with well behaved unary operations (including an example of a
\nfb\ \fis\ with \fb\ reduct) have only recently appeared in \cite{Jackson&Volkov:2010}. Examples of the `opposite' kind (of \fb\ \fuss\
with \nfb\ reducts) are not yet known.

For \fss, the following strengthening of the property of being \nfb\ has been successfully studied. Recall that a variety $\mathbf{V}$ of
[unary] \sgps\ is called \emph{locally finite} if every \fg\ [unary] \sgp\ in $\mathbf{V}$ is finite. A finite [unary] \sgp\ is said to be
\emph{\infb} (INFB for short) if it is not contained in any locally finite \fb\ variety of [unary] \sgps. Since the variety generated by a
finite [unary] \sgp\ is known to be locally finite, an INFB [unary] \sgp\ certainly is \nfb. In fact, the property of being INFB is much
stronger than the property of being \nfb\ and also behaves more regularly, see \cite{Volkov:2001} for details.

Sapir~\cite{Sapir:1987} has given an efficient (in the algorithmic sense of the word) description of INFB \sgps. INFB unary \sgps\ have
been investigated in~\cite{Dolinka:2010,ADV:2012} where some sufficient and some necessary conditions for a \fis\ to be INFB have been
found. Again, in this situation it is quite natural to ask what happens when one passes from a \fus\ to its reduct. The aforementioned
example of~\cite{Lawrence&Willard:1998} is in fact INFB so that in general an INFB unary \sgp\ may have a \fb\ reduct. This is however
impossible for a \fis; indeed, it is easy to verify (see Lemma~\ref{easy} below) that the reduct of an INFB \is\ must be INFB. The converse
is not true as first observed in~\cite{Sapir:1993}, and it is this circumstance that gives rise to the specific question addressed in the
present paper: when does an involution $x\mapsto x^*$ defined on an INFB \sgp\ $\langle S,\cdot\rangle$ preserve the property of being INFB
in the sense that the resulting \is\ $\mathcal{S}=\langle S,\cdot,{}^*\rangle$ is INFB as a unary \sgp?  We show (Theorem~\ref{twisted})
that this is the case whenever the variety generated by $\mathcal{S}$ contains a certain 3-element \is\ $\TSL$ (twisted semilattice). This
result has several applications: first, we give new, easy and uniform proofs for some examples of INFB \iss\ found in~\cite{ADV:2012};
second, we exhibit a further series of INFB \iss. We also show (Corollary~\ref{characterization}) that if $\langle S,\cdot\rangle$ is a finite regular
\sgp, then the presence of the 3-element twisted semilattice $\TSL$ in the variety generated by $\mathcal{S}$ is not only sufficient but
also necessary for the property of being INFB to persist. Combined with Sapir's result, this leads to an efficient description of regular
INFB \iss.

\section{Preliminaries}
\label{sec:preliminaries}

We assume the reader's acquaintance with basic concepts of universal algebra such as the notion of a variety and the HSP-theorem, see,
e.g., \cite[Chapter~II]{BuSa81}. Section~\ref{sec:regular} also requires some knowledge of Green's relations, cf.~\cite[Chapter~2]{how}.

A unary \sgp\ $\mathcal{S}=\langle S,\cdot,{}^*\rangle$ is called an \emph{\is} if it satisfies the identities
\begin{equation}
\label{eq:invlution} (xy)^*=y^*x^* \quad \text{ and  }\quad (x^*)^*=x,
\end{equation}
in other words, if the unary operation $x\mapsto x^*$ is an involutory anti-automorphism of the reduct $\langle S,\cdot\rangle$.

The \emph{free \is} $\FI(X)$ on a given alphabet $X$ can be constructed as follows.  Let $\overline{X}=\{x^*\mid x\in X\}$ be a disjoint
copy of $X$. We refer to the elements of $X$ as \emph{plain letters} and to the elements of $\overline{X}$ as \emph{starred letters}.
Define $(x^*)^*=x$ for all $x^*\in \overline{X}$. Then $\FI(X)$ is the free \sgp\ $(X\cup\overline{X})^+$ endowed with the involution
defined by
$$(x_1\cdots x_m)^* = x_m^*\cdots x_1^*$$
for all $x_1,\dots,x_m\in X\cup \overline{X}$. We refer to elements of $\FI(X)$ as \emph{involutory words over $X$} while elements of the
free semigroup $X^+$ will be referred to as \emph{plain words over $X$}.

If an \is\ $\mathcal{T}=\langle T,\cdot,{}^*\rangle$ is generated by a set $Y\subseteq T$, then every element in $\mathcal{T}$ can be
represented by an involutory word over $Y$ and thus by a plain word over $Y\cup\overline{Y}$ where $\overline{Y}=\{y^*\mid y\in Y\}$. Hence
the reduct $\langle T,\cdot\rangle$ is generated by the set $Y\cup\overline{Y}$; in particular, $\mathcal{T}$ is \fg\ if and only if so is
$\langle T,\cdot\rangle$. This observation immediately leads to the following fact already mentioned in the introduction.

\begin{Lemma}
\label{easy} If an \is\ $\mathcal{S}=\langle S,\cdot,{}^*\rangle$ is \infb, then so is its reduct $\langle S,\cdot\rangle$.
\end{Lemma}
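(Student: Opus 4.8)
The plan is to argue by contraposition: assuming the reduct $\langle S,\cdot\rangle$ is not \infb, I would exhibit a locally finite \fb\ variety of \iss\ containing $\mathcal{S}$, which shows that $\mathcal{S}$ is not \infb\ either.

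So suppose $\langle S,\cdot\rangle$ is not \infb. By definition it lies in some locally finite \fb\ variety $\mathbf{V}$ of \sgps; fix a finite basis $\Sigma$ of identities for $\mathbf{V}$. Let $\mathbf{W}$ be the variety of \iss\ defined by $\Sigma$ together with the defining identities~\eqref{eq:invlution}. Concretely, $\mathbf{W}$ consists of exactly those \iss\ whose \sgp\ reducts lie in $\mathbf{V}$. Since $\Sigma$ is finite, $\mathbf{W}$ is \fb\ as a variety of unary \sgps, and $\mathcal{S}\in\mathbf{W}$ because $\langle S,\cdot\rangle\in\mathbf{V}$.

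The one point that needs checking is that $\mathbf{W}$ is locally finite. Let $\mathcal{T}=\langle T,\cdot,{}^*\rangle$ be a \fg\ member of $\mathbf{W}$, generated by a finite set $Y$. By the observation made just before the lemma, the \sgp\ reduct $\langle T,\cdot\rangle$ is then generated by the finite set $Y\cup\overline{Y}$, where $\overline{Y}=\{y^*\mid y\in Y\}$. As $\langle T,\cdot\rangle\in\mathbf{V}$ and $\mathbf{V}$ is locally finite, $\langle T,\cdot\rangle$ is finite; hence $\mathcal{T}$ is finite. Therefore $\mathbf{W}$ is a locally finite \fb\ variety of \iss\ containing $\mathcal{S}$, so $\mathcal{S}$ is not \infb. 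This establishes the contrapositive and hence the lemma.

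Since the statement is as easy as its label suggests, there is no serious obstacle; the only step requiring any attention is the transition from a finite generating set of $\mathcal{T}$ to a finite generating set of its reduct, and this is precisely the remark recorded immediately before the lemma.
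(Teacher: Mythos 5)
Your proof is correct and is essentially the paper's own argument, merely phrased as a contraposition rather than a contradiction: both take the finite basis $\Sigma$ of a locally finite variety containing the reduct, adjoin the involution identities~\eqref{eq:invlution} to get a finitely based variety of \iss\ containing $\mathcal{S}$, and verify local finiteness via the observation that a finite generating set $Y$ of an \is\ yields the finite generating set $Y\cup\overline{Y}$ of its reduct.
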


\begin{proof}
Arguing by contradiction, assume that $\langle S,\cdot\rangle$ is not INFB. Then $\langle S,\cdot\rangle$ belongs to a locally finite plain
\sgp\ variety defined by a finite identity system $\Sigma$. Consider the variety $\mathbf{V}$ of \iss\ defined by the identities
\eqref{eq:invlution} and $\Sigma$. Clearly, $\mathbf{V}$ is \fb\ and $\mathcal{S}\in\mathbf{V}$. If $\mathcal{T}=\langle
T,\cdot,{}^*\rangle$ is a \fg\ \is\ from $\mathbf{V}$ then the reduct $\langle T,\cdot\rangle$ is a \fg\ plain \sgp\ by the observation
preceding the formulation of the lemma. Since the reduct satisfies the identities in $\Sigma$ and $\Sigma$ defines a locally finite plain
\sgp\ variety, we conclude that the base set $T$ is finite. Hence the variety $\mathbf{V}$ is locally finite and $\mathcal{S}$ belongs to a
locally finite \fb\ variety, a contradiction.
\end{proof}

As mentioned, the converse of Lemma~\ref{easy} is not true in general. For an example, consider the well known \emph{Brandt monoid}
$\langle B_2^1,\cdot\rangle$, where $B_2^1$ is the set of the following six integer $2\times 2$-matrices:
$$\begin{pmatrix} 0 & 0\\ 0 & 0\end{pmatrix},\
\begin{pmatrix} 1 & 0\\ 0 & 0\end{pmatrix},\
\begin{pmatrix} 0 & 1\\ 0 & 0\end{pmatrix},\
\begin{pmatrix} 0 & 0\\ 1 & 0\end{pmatrix},\
\begin{pmatrix} 0 & 0\\ 0 & 1\end{pmatrix},\
\begin{pmatrix} 1 & 0\\ 0 & 1\end{pmatrix},$$
and the binary operation $(A_1,A_2)\mapsto A_1\cdot A_2$ is the usual matrix multiplication. It is known
\cite[Corollary~6.1]{sapirburnside} that the Brandt monoid is INFB (this was in fact the very first example of an INFB \sgp). The Brandt
monoid admits a natural involution, namely, the usual matrix transposition $A\mapsto A^T$. However, the \is\ $\langle
B_2^1,\cdot,{}^T\rangle$ is not INFB as shown in~\cite{Sapir:1993}. Further examples can be found in~\cite{ADV:2012}: if $\mathcal{K}$ is a
finite field and $\mathrm{M}_n(\mathcal{K})$ stands for the set of all $n\times n$-matrices over $\mathcal{K}$, then the \sgp\
$\langle\mathrm{M}_n(\mathcal{K}),\cdot\rangle$ is INFB for any $n\ge 2$ by \cite[Corollary~6.2]{sapirburnside} while the \is\
$\langle\mathrm{M}_2(\mathcal{K}),\cdot,{}^T\rangle$ is not INFB if the number of elements in $\mathcal{K}$ leaves reminder~3 when divided
by~4.

Thus, not every involution defined on an INFB \sgp\ preserves the property of being INFB, and we are looking towards a classification of
`INFB-preserving' involutions. Our main tool is the following result from~\cite{ADV:2012}. Recall the notions that appear in its
formulation. Let $x_1,x_2,\dots,x_n,\dots$ be a sequence of letters. The sequence $\{Z_n\}_{n=1,2,\dots}$ of \emph{Zimin words} is defined
inductively by $Z_1=x_1$, $Z_{n+1}=Z_nx_{n+1}Z_n$. We say that an involutory word $v$ is an \emph{involutory isoterm for a unary semigroup
$\mathcal{S}$} if the only involutory word $v'$ such that $\mathcal{S}$ satisfies the \is\ identity $v=v'$ is the word $v$ itself.

\begin{Thm}[{\mdseries\cite[Theorem~2.3]{ADV:2012}}]
\label{Theorem 2.2} Let $\mathcal{S}$ be a \fis. If all Zimin words are involutory isoterms for $\mathcal{S}$, then $\mathcal{S}$ is \infb.
\end{Thm}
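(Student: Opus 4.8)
The plan is to argue by contradiction, reducing the statement to the assertion that a locally finite \fb\ variety of \iss\ must \emph{break} a Zimin word. Suppose that $\mathcal{S}$ is not \infb. Then $\mathcal{S}$ lies in some locally finite \fb\ variety $\mathbf{V}$ of \iss; write $\mathbf{V}=\var(\Sigma)$ for a finite set $\Sigma$ of \is\ identities, and let $d$ bound the lengths of all involutory words occurring in $\Sigma$. Since $\mathcal{S}\in\mathbf{V}$, it satisfies every identity of $\mathbf{V}$; hence it suffices to show that $\mathbf{V}$ satisfies some identity $Z_m=w$ in which $w\ne Z_m$ is an involutory word, as this contradicts the hypothesis that $Z_m$ is an involutory isoterm for $\mathcal{S}$. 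This reduction is the involutory counterpart of the hard direction of Sapir's criterion for plain \sgps\ \cite{Sapir:1987}, and I would establish it by adapting Sapir's combinatorial construction so that it is performed inside the free \is\ $\FI(X)$.

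The engine has two ingredients. First, local finiteness of $\mathbf{V}$ makes each relatively free \is\ $F_{\mathbf{V}}(x_1,\dots,x_r)$ on a fixed finite number $r$ of generators a finite \is; since Zimin words are \emph{unavoidable}, every sufficiently long word over these generators encounters a substitution instance of a prescribed $Z_n$, and evaluating such words in the finite free object forces, by a pigeonhole/Ramsey argument, a nontrivial identity of $\mathbf{V}$ between two words each carrying a copy of the Zimin pattern. Second, finite basability enters through the bound $d$: because every identity of $\mathbf{V}$ follows from the length-$\le d$ identities of $\Sigma$, the collapse just detected is realized by bounded rewriting steps, and the self-similarity of the recursion $Z_{k+1}=Z_kx_{k+1}Z_k$ then lets one propagate it onto a genuine Zimin word, yielding a nontrivial identity $Z_n=w$. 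It is essential that finite basability, and not merely local finiteness, is invoked here: the variety generated by $\mathcal{S}$ itself is locally finite yet, by hypothesis, breaks no Zimin word, so only the length bound carried by $\Sigma$ can force the reduction.

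Because the whole argument runs inside $\FI(X)$ rather than the free plain \sgp, every rewriting move must respect the anti-automorphism law $(xy)^*=y^*x^*$, and in particular the word $w$ produced on the right-hand side may contain starred letters. This is exactly why the hypothesis must speak of \emph{involutory} isoterms and cannot be weakened to isoterms of the reduct $\langle S,\cdot\rangle$ --- indeed the reduct can be \infb\ without $\mathcal{S}$ being so, as the transpose on the Brandt monoid already shows. I expect the main obstacle to be this second ingredient: transferring an arbitrary short collapse onto a genuine Zimin word while keeping all manipulations compatible with the involution, and checking that the resulting identity $Z_n=w$ is nontrivial in $\FI(X)$, i.e.\ that $w\ne Z_n$ as involutory words and not merely as plain words. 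The remaining steps --- the pigeonhole bound and the final contradiction with the isoterm hypothesis --- are routine.
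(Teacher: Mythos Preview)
The paper does not prove this statement at all: Theorem~\ref{Theorem 2.2} is quoted without proof as \cite[Theorem~2.3]{ADV:2012} and then used as a black box in the proof of Theorem~\ref{twisted}. So there is no ``paper's own proof'' to compare your proposal against.

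That said, your outline is pointed in the right direction---the result in \cite{ADV:2012} is indeed obtained by transplanting Sapir's argument from \cite{Sapir:1987,sapirburnside} into the free \is\ $\FI(X)$, and your identification of the two ingredients (local finiteness forcing a collapse via unavoidability of Zimin words, and the finite basis bound letting one localize that collapse onto an actual $Z_n$) is accurate. But as written this is a plan, not a proof: the phrases ``I would establish it by adapting'' and ``I expect the main obstacle to be'' concede that the core combinatorics has not been carried out. The genuinely hard step is precisely the one you flag---showing that a length-bounded derivation in $\mathbf{V}$ can be converted into a nontrivial identity $Z_n=w$---and nothing in your text does this; you would need either to reproduce the relevant portion of \cite{ADV:2012} or to cite it. If your intent is simply to invoke the theorem as the present paper does, then no proof is required; if your intent is to supply an independent proof, the proposal as it stands has a gap exactly where the substance lies.
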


\section{Main result and its applications}
\label{sec:main}

Recall that \sgps\ satisfying both $xy=yx$ and $x^2=x$ are called \emph{semilattices}. An \is\ $\mathcal{S}=\langle S,\cdot,{}^*\rangle$
whose reduct $\langle S,\cdot\rangle$ is a semilattice with 0 is said to be a \emph{twisted semilattice} if 0 is the only fixed point of
the involution $x\mapsto x^*$. This class of \iss\ was first considered in~\cite{Fajt}. It is easy to see that the minimum non-trivial
object in this class is the 3-element twisted semilattice $\TSL=\langle\{e,f,0\},\cdot,{}^*\rangle$ in which $e^2=e$, $f^2=f$ and all other
products are equal to $0$, while the unary operation is defined by $e^*=f$, $f^*=e$, and $0^*=0$.

If $ \mathcal{S}$ is an \is, we denote by $\var\mathcal{S}$ the variety generated by $\mathcal{S}$.

\begin{Thm}\label{sufficientINFB}
\label{twisted} Let $\mathcal{S}=\langle S,\cdot,{}^*\rangle$ be a \fis\ such that $\TSL\in\var\mathcal{S}$. If the reduct $\langle
S,\cdot\rangle$ is \infb, then so is $\mathcal{S}$.
\end{Thm}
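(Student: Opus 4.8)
The plan is to apply Theorem~\ref{Theorem 2.2}: it suffices to show that every Zimin word $Z_n$ is an involutory isoterm for $\mathcal{S}$. Fix $n$ and suppose $\mathcal{S}$ satisfies an \is\ identity $Z_n=w$, where $w$ is an involutory word over $\{x_1,\dots,x_n\}$; the goal is to conclude $w=Z_n$. The key observation is that since $\langle S,\cdot\rangle$ is INFB, Sapir's characterization (or directly the fact that the hypotheses of Sapir's theorem must hold) guarantees that $Z_n$ is a \emph{plain} isoterm for $\langle S,\cdot\rangle$ for every $n$; in particular $\langle S,\cdot\rangle$ satisfies no nontrivial \emph{plain} identity of the form $Z_n=u$ with $u\in\{x_1,\dots,x_n\}^+$. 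So the only obstacle is the possibility that $w$ contains starred letters. The task reduces to: using the hypothesis $\TSL\in\var\mathcal{S}$, show that $w$ cannot involve any starred letter, so that $w$ is in fact a plain word and the plain-isoterm property finishes the argument.

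The main step is therefore to exploit $\TSL$. Because $\var\mathcal{S}$ contains $\TSL$, the \is\ $\mathcal{S}$ satisfies only those \is\ identities that hold in $\TSL$; equivalently, if $Z_n=w$ holds in $\mathcal{S}$, it holds in $\TSL$. I would then analyze the identity $Z_n=w$ in $\TSL$ directly. In $\TSL$, a product of generators evaluates to something nonzero only in very restricted circumstances: assigning to $x_1$ the idempotent $e$ makes $Z_n$ evaluate to $e$ (since $Z_n$, read as a plain word, is a product of copies of $x_1$ after collapsing, and $e$ is idempotent while $e^*=f$, $ef=0$), provided no starred occurrence of $x_1$ intervenes. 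More carefully: evaluate $w$ under the substitution $x_1\mapsto e$, $x_j\mapsto e$ for all $j$ (all plain letters to $e$, so all starred letters become $f$). Under this substitution $Z_n\mapsto e$ because $Z_n$ as an involutory word contains no starred letters, and in the semilattice reduct the product of copies of $e$ is $e$. If $w$ contains at least one starred letter, then under the same substitution $w$ contains both a factor $e$ and a factor $f$ adjacent (or the whole product telescopes through a point where an $e$ meets an $f$), forcing $w\mapsto 0\ne e$ in $\TSL$ — a contradiction. Hence $w$ has no starred letters.

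Once $w$ is known to be a plain word, the identity $Z_n=w$ is a plain identity satisfied by the reduct $\langle S,\cdot\rangle$; since $\langle S,\cdot\rangle$ is INFB, Sapir's theorem tells us all Zimin words are plain isoterms for it, whence $w=Z_n$. Thus every $Z_n$ is an involutory isoterm for $\mathcal{S}$, and Theorem~\ref{Theorem 2.2} yields that $\mathcal{S}$ is \infb.

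I expect the delicate point to be the combinatorial claim that ``$w$ contains a starred letter'' forces $w$ to evaluate to $0$ under the all-to-$e$ substitution in $\TSL$. One must be careful that a single starred occurrence really does produce an $ef$ or $fe$ factor somewhere in the evaluated product: if $w=y_1\cdots y_m$ with each $y_i\in\{x_1,\dots,x_n\}\cup\{x_1^*,\dots,x_n^*\}$ and some $y_i$ is starred, then either some plain letter sits next to it (giving $ef=0$ or $fe=0$), or $w$ consists entirely of starred letters, in which case $w\mapsto f\ne e$ — still a contradiction. Making this case analysis airtight, together with citing the precise form of Sapir's characterization that gives the plain-isoterm conclusion from INFB, are the two things that need care; the rest is routine.
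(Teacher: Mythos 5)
Your proposal is correct and follows essentially the same route as the paper's proof: reduce to Theorem~\ref{Theorem 2.2}, dispose of plain words $z$ via Sapir's result that Zimin words are plain isoterms for an INFB semigroup, and rule out starred letters by substituting $e$ for every plain letter in $\TSL$ and observing that any product involving $f$ equals $f$ or $0$, never $e$. The case analysis you flag as delicate is exactly the one-line observation the paper makes, so nothing further is needed.
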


\begin{proof}
By Theorem~\ref{Theorem 2.2} we only have to show that $\mathcal{S}$ satisfies no non-trivial \is\ identity of the form $Z_n=z$. If $z$ is
a plain word, we can refer to~\cite[Proposition~7]{sapirburnside} according to which the INFB \sgp\ $\langle S,\cdot\rangle$ satisfies no
non-trivial plain \sgp\ identity of the form $Z_n=z$. Now suppose that $\mathcal{S}$ satisfies an identity $Z_n=z$ such that the involutory
word $z$ is not a plain word. This means that $z$ contains a starred letter. Since $\TSL\in\var\mathcal{S}$, the identity $Z_n=z$ holds in
$\TSL$. Substitute the element $e$ of $\TSL$ for all plain letters occurring in $Z_n$ and $z$. Since $e^2=e$, the value of the word $Z_n$
under this substitution equals $e$. On the other hand, since $z$ contains a starred letter, the value of $z$ is a product involving
$e^*=f$, and every such product is equal to either $f$ or 0. This is a contradiction.
\end{proof}

As for applications of Theorem~\ref{twisted}, we first give simplified and uniform proofs for two important results from \cite{ADV:2012}.
To start with, consider the \emph{twisted Brandt monoid} $\TB=\langle B_2^1,\cdot,{}^D\rangle$ arising when one endows the Brandt monoid
$\langle B_2^1,\cdot\rangle$ with the unary operation $A\mapsto A^D$ that fixes the matrices $\left(\begin{smallmatrix} 0 & 0\\ 0 &
0\end{smallmatrix}\right),\ \left(\begin{smallmatrix} 0 & 1\\ 0 & 0\end{smallmatrix}\right),\ \left(\begin{smallmatrix} 0 & 0\\ 1 &
0\end{smallmatrix}\right),\ \left(\begin{smallmatrix} 1 & 0\\ 0 & 1\end{smallmatrix}\right)$ and swaps each of the matrices
$\left(\begin{smallmatrix}1 & 0\\ 0 & 0\end{smallmatrix}\right),\ \left(\begin{smallmatrix} 0 & 0\\ 0 & 1\end{smallmatrix}\right)$ with the
other one. We notice that this unary operation is just the reflection with respect to the secondary diagonal (from the top right to the
bottom left corner). The reflection (called the \emph{skew transposition}) makes sense for every square matrix and is in fact an involution
of the \sgp\ $\langle\mathrm{M}_n(\mathcal{K}),\cdot\rangle$; this follows from the observation that for every matrix
$A\in\mathrm{M}_n(\mathcal{K})$, one has $A^D=JA^TJ$, where $J$ is the $n\times n$-matrix with 1s on the secondary diagonal and 0s
elsewhere. Moreover suppose that the field $\mathcal{K}$ is such that there exists a matrix $R\in\mathrm{M}_n(\mathcal{K})$ satisfying
$R^T=R$ and $R^2=J$ (this happens, e.g., when the characteristic of $\mathcal{K}$ is not  $2$ and  square roots of ${-1}$ and ${2}$ do
exist in $\mathcal{K}$). Then the conjugation map $A\mapsto A\psi:= R^{-1}AR$ satisfies $(A^D)\psi=(A\psi)^T$ and hence is an isomorphism
between the \iss\ $\langle\mathrm{M}_n(\mathcal{K}),\cdot,{}^D\rangle$ and $\langle\mathrm{M}_n(\mathcal{K}),\cdot,{}^T\rangle$. Clearly,
the set $B_2^1$ can be considered as a subset of $\mathrm{M}_2(\mathcal{K})$, and as such it is closed under both the usual transposition
and the skew one. Therefore it appears a bit surprising that the involutory sub\sgps\ $\TB=\langle B_2^1,\cdot,{}^D\rangle$ and $\langle
B_2^1,\cdot,{}^T\rangle$ of the (isomorphic) \iss\ $\langle\mathrm{M}_2(\mathcal{K}),\cdot,{}^D\rangle$ and respectively
$\langle\mathrm{M}_2(\mathcal{K}),\cdot,{}^T\rangle$ turn out to be so much different. Indeed, $\langle B_2^1,\cdot,{}^T\rangle$ is not
INFB (see Section~\ref{sec:preliminaries}) while $\TB$ is, as the following corollary reveals.

\begin{Cor}[{\mdseries\cite[Corollary 2.7]{ADV:2012}}]
\label{twisted Brandt} The twisted Brandt monoid $\TB$ is inherently \nfb.
\end{Cor}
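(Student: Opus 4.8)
The plan is to reduce the statement to Theorem~\ref{twisted}, which has two hypotheses: that the reduct $\langle B_2^1,\cdot\rangle$ is \infb, and that $\TSL\in\var\TB$. The first of these is nothing new --- it is exactly the assertion that the Brandt monoid is \infb, namely \cite[Corollary~6.1]{sapirburnside}, already recalled in Section~\ref{sec:preliminaries}. So the only thing that requires an argument is the membership $\TSL\in\var\TB$, and for this it is enough to find a copy of the twisted semilattice inside $\TB$ itself.

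Here is how I would do that. Write $E=\left(\begin{smallmatrix}1&0\\0&0\end{smallmatrix}\right)$; by the description of the skew transposition we have $E^D=\left(\begin{smallmatrix}0&0\\0&1\end{smallmatrix}\right)$, while the zero matrix $0=\left(\begin{smallmatrix}0&0\\0&0\end{smallmatrix}\right)$ satisfies $0^D=0$. A routine matrix computation gives $E^2=E$, $(E^D)^2=E^D$ and $E\cdot E^D=E^D\cdot E=0$, so the three-element set $\{E,E^D,0\}$ is closed under the multiplication of $B_2^1$; it is obviously closed under ${}^D$, which interchanges $E$ and $E^D$ and fixes $0$. Thus $\langle\{E,E^D,0\},\cdot,{}^D\rangle$ is an involutory sub\sgp\ of $\TB$, and the assignment $e\mapsto E$, $f\mapsto E^D$, $0\mapsto 0$ is an isomorphism onto $\TSL$. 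In particular $\TSL\in\var\TB$, and Theorem~\ref{twisted} immediately yields that $\TB$ is \infb.

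I do not foresee any genuine difficulty: once one spots that the two ``diagonal'' idempotents of $B_2^1$ together with the zero matrix already form a twisted semilattice under ${}^D$, everything else is quoted from Theorem~\ref{twisted} and from the known result that $\langle B_2^1,\cdot\rangle$ is \infb. It may be worth remarking why the skew transposition is essential here: under the ordinary transposition ${}^T$ both idempotents $E$ and $E^D$ are fixed, so $\{E,E^D,0\}$ then carries only a semilattice structure with trivial involution rather than a twisted one. This dovetails with the fact, recalled in Section~\ref{sec:preliminaries}, that $\langle B_2^1,\cdot,{}^T\rangle$ fails to be \infb, and it illustrates that the hypothesis $\TSL\in\var\mathcal{S}$ in Theorem~\ref{twisted} is doing real work.
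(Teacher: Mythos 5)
Your proposal is correct and follows exactly the paper's own argument: both identify the involutory subsemigroup $\bigl\{\left(\begin{smallmatrix}1&0\\0&0\end{smallmatrix}\right),\left(\begin{smallmatrix}0&0\\0&1\end{smallmatrix}\right),\left(\begin{smallmatrix}0&0\\0&0\end{smallmatrix}\right)\bigr\}$ of $\TB$ as a copy of $\TSL$, quote \cite[Corollary~6.1]{sapirburnside} for the reduct being \infb, and then apply Theorem~\ref{twisted}. The closing remark contrasting ${}^D$ with ${}^T$ is a nice observation but not needed for the proof.
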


\begin{proof}
As already mentioned, the reduct $\langle B_2^1,\cdot\rangle$ of $\TB$ is INFB by~\cite[Corollary~6.1]{sapirburnside}. The matrices
$\left(\begin{smallmatrix}1 & 0\\ 0 & 0\end{smallmatrix}\right)$, $\left(\begin{smallmatrix}0 & 0\\ 0 & 1\end{smallmatrix}\right)$, and
$\left(\begin{smallmatrix}0 & 0\\ 0 & 0\end{smallmatrix}\right)$ form an involutory sub\sgp\ in $\TB$ and, obviously, this sub\sgp\ is
isomorphic to the 3-element twisted semilattice $\TSL$. Thus, Theorem~\ref{twisted} applies.
\end{proof}

Now consider the matrix \iss\ $\langle\mathrm{M}_n(\mathcal{K}),\cdot,{}^T\rangle$ where $\mathcal{K}$ is a finite field.

\begin{Cor}[{\mdseries\cite[Theorems 3.9 and 3.10]{ADV:2012}}]
\label{matrices} The \is\ $\langle\mathrm{M}_n(\mathcal{K}),\cdot,{}^T\rangle$, where $\mathcal{K}$ is a finite field, is \infb\ if $n\ge
3$ or if $n=2$ and the number of elements in $\mathcal{K}$ is not of the form $4k+3$.
\end{Cor}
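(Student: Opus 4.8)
The plan is to derive the corollary from Theorem~\ref{twisted}. That theorem needs two inputs: that the reduct $\langle\mathrm{M}_n(\mathcal{K}),\cdot\rangle$ is \infb, and that $\TSL$ lies in $\var\langle\mathrm{M}_n(\mathcal{K}),\cdot,{}^T\rangle$. The first input is available off the shelf: $\langle\mathrm{M}_n(\mathcal{K}),\cdot\rangle$ is \infb\ for every $n\ge 2$ by \cite[Corollary~6.2]{sapirburnside}. So the entire task reduces to exhibiting $\TSL$ in the variety generated by $\langle\mathrm{M}_n(\mathcal{K}),\cdot,{}^T\rangle$ in each of the listed cases, and this is where all of the content is.

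I would base the construction on the following observation. Suppose $v,w$ are column vectors over $\mathcal{K}$ (of the appropriate length) with $v^Tv=w^Tw=0$ and $w^Tv=1$; in other words, a hyperbolic pair of vectors that are isotropic for the standard symmetric bilinear form. Then $E:=vw^T$ and $F:=wv^T=E^T$ satisfy $E^2=E$, $F^2=F$ and $EF=FE=0$, so $\{E,F,0\}$ is an involutory sub\sgp\ of $\langle\mathrm{M}_n(\mathcal{K}),\cdot,{}^T\rangle$ isomorphic to $\TSL$. It remains to produce such a pair. For $n\ge 3$ I would first note that the block embedding $A\mapsto\left(\begin{smallmatrix}A & 0\\ 0 & 0\end{smallmatrix}\right)$ realizes $\langle\mathrm{M}_3(\mathcal{K}),\cdot,{}^T\rangle$ as an involutory sub\sgp\ of $\langle\mathrm{M}_n(\mathcal{K}),\cdot,{}^T\rangle$, so it is enough to find $v,w\in\mathcal{K}^3$; in characteristic $2$ the vectors $v=(1,1,0)^T$, $w=(1,0,1)^T$ work, and in odd characteristic one picks a nonzero isotropic $v$ (the ternary form $x_1^2+x_2^2+x_3^2$ is isotropic over every finite field), then any $u$ with $u^Tv=1$, and corrects it to $w:=u-\tfrac12(u^Tu)v$, which is again isotropic and still satisfies $w^Tv=1$. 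For $n=2$ and $\mathrm{char}\,\mathcal{K}\ne 2$ the hypothesis ``$|\mathcal{K}|$ is not of the form $4k+3$'' means precisely that $-1$ is a square in $\mathcal{K}$; fixing $i$ with $i^2=-1$, the vector $v=(i,1)^T\in\mathcal{K}^2$ is isotropic, and the same correction trick produces a partner $w$.

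The genuinely different case is $n=2$ with $\mathrm{char}\,\mathcal{K}=2$ (so $|\mathcal{K}|$ is a power of $2$, again not of the form $4k+3$). Here a copy of $\TSL$ cannot be found inside $\langle\mathrm{M}_2(\mathcal{K}),\cdot,{}^T\rangle$ at all: such a copy would again force a hyperbolic pair of isotropic vectors in $\mathcal{K}^2$, but in characteristic $2$ a vector $(a,b)^T$ is isotropic only when $a=b$, and then $w^Tv=2ab=0\ne 1$. Instead I would obtain $\TSL$ as a homomorphic image. Over the prime subfield $\mathbb{F}_2\subseteq\mathcal{K}$, set $E=\left(\begin{smallmatrix}1 & 1\\ 0 & 0\end{smallmatrix}\right)$, $F=E^T=\left(\begin{smallmatrix}1 & 0\\ 1 & 0\end{smallmatrix}\right)$ and $G=FE=\left(\begin{smallmatrix}1 & 1\\ 1 & 1\end{smallmatrix}\right)$; a direct check gives $E^2=E$, $F^2=F$, $G^2=0$, $EF=EG=GF=0$, $FE=FG=GE=G$ and $G^T=G$, so $U=\{E,F,G,0\}$ is a $4$-element involutory sub\sgp\ of $\langle\mathrm{M}_2(\mathcal{K}),\cdot,{}^T\rangle$. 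The partition with blocks $\{E\}$, $\{F\}$ and $\{G,0\}$ is a congruence of $U$ --- the only points to verify are that $G$ and $0$ behave identically under left and right multiplication by every element of $U$, and that $G^T=G$ and $0^T=0$ lie in the same block --- and the quotient is manifestly isomorphic to $\TSL$. Hence $\TSL$ is a homomorphic image of the sub\sgp\ $U$, so $\TSL\in\var\langle\mathrm{M}_2(\mathcal{K}),\cdot,{}^T\rangle$, and Theorem~\ref{twisted} finishes the proof.

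I expect the main obstacle to be exactly this last case, where no faithful copy of $\TSL$ is available and one has to hit upon the right $4$-element involutory \sgp\ that collapses onto $\TSL$. (This is also the sharp boundary of the statement: when $|\mathcal{K}|$ has the form $4k+3$ the form $x^2+y^2$ is anisotropic over $\mathcal{K}$, no $\TSL$-like configuration survives, and indeed $\langle\mathrm{M}_2(\mathcal{K}),\cdot,{}^T\rangle$ fails to be \infb.) Everything else is routine: the linear algebra amounts to the standard fact that a hyperbolic pair of isotropic vectors exists once the dimension is at least $3$, or once $-1$ is a square in dimension $2$; and the verifications that the small matrix sets exhibited are closed under product and transpose are mechanical.
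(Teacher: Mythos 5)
Your proposal is correct, and its overall skeleton is the same as the paper's: cite \cite[Corollary~6.2]{sapirburnside} for the reduct and then feed Theorem~\ref{twisted} by locating $\TSL$ in $\var\langle\mathrm{M}_n(\mathcal{K}),\cdot,{}^T\rangle$. Where you differ is in how $\TSL$ is located. The paper works in every case with a single isotropic vector $v$ (with $v^Tv=0$, supplied by Chevalley--Warning for $n\ge3$ and by a square root of $-1$ for $n=2$) paired against the standard basis vector $e_1$; this yields a $4$-element involutory sub\sgp\ $\{e,f,g,0\}$ with $fe=0$ but $ef=g\ne0$, and $\TSL$ is then obtained as the Rees quotient over the ideal $\{g,0\}$ --- so uniformly as a divisor, never as a sub\sgp. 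You instead use a hyperbolic pair of isotropic vectors $v,w$ with $w^Tv=1$, which gives $EF=FE=0$ on the nose and hence a faithful $3$-element copy of $\TSL$ inside the matrix \is; this works in all cases except $n=2$ in characteristic $2$, where you correctly observe that no such copy can exist and fall back on a Rees quotient of a $4$-element sub\sgp\ --- which, as it happens, is exactly the paper's construction specialized to $x=1$. Both routes consume the same arithmetic input (isotropy of $x_1^2+x_2^2+x_3^2$ over finite fields, resp.\ $-1$ being a square when $|\mathcal{K}|\not\equiv 3 \pmod 4$), and all of your computations ($E^2=E$, $F^2=F$, $EF=FE=0$ from $v^Tv=w^Tw=0$, $w^Tv=1$; the correction $w=u-\tfrac12(u^Tu)v$; the congruence collapsing $\{G,0\}$) check out. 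The paper's version is slightly more uniform since one construction covers every case; yours buys the sharper structural information that $\TSL$ actually embeds into $\langle\mathrm{M}_n(\mathcal{K}),\cdot,{}^T\rangle$ except in the single case $n=2$, $\operatorname{char}\mathcal{K}=2$, together with a proof that this exception is genuine.
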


\begin{proof}
The reduct $\langle\mathrm{M}_n(\mathcal{K}),\cdot\rangle$ is INFB for each $n\ge2$ and each finite field $\mathcal{K}$
by~\cite[Corollary~6.2]{sapirburnside}. To invoke Theorem~\ref{twisted}, it only remains to show that, under the condition of the
corollary, the 3-element twisted semilattice $\TSL$ belongs to the variety $\var\langle\mathrm{M}_n(\mathcal{K}),\cdot,{}^T\rangle$.

First let $n\ge3$. By the Chevalley--Warning theorem \cite[Corollary~2 in \S1.2]{Serre}, the field $\mathcal{K}$ contains some elements
$x,y$ satisfying $1+x^2+y^2=0$. Then the $n\times n$-matrices
$$e=\begin{pmatrix}
1 & 0 & 0 & \cdots & 0\\
x & 0 & 0 & \cdots & 0\\
y & 0 & 0 & \cdots & 0\\
\vdots & \vdots & \vdots &\ddots & \vdots\\
0 & 0 & 0 & \cdots & 0
\end{pmatrix},\ \ f=
\begin{pmatrix} 1 & x & y & \cdots & 0\\
0 & 0 & 0 & \cdots & 0\\
0 & 0 & 0 & \cdots & 0\\
\vdots & \vdots & \vdots &\ddots & \vdots\\
0 & 0 & 0 & \cdots & 0
\end{pmatrix},\
g=
\begin{pmatrix} 1 & x & y & \cdots & 0\\
x & x^2 & xy & \cdots & 0\\
y & xy & y^2 & \cdots & 0\\
\vdots & \vdots & \vdots &\ddots & \vdots\\
0 & 0 & 0 & \cdots & 0
\end{pmatrix}$$
satisfy
$$e^2=e,\ f^2=f,\ ef=g,\ fe=0,\ e^T=f,\ f^T=e,\ \text{ and } g^T=g.$$
Therefore the set $\{e,f,g,0\}$ forms an involutory sub\sgp\ in $\langle\mathrm{M}_n(\mathcal{K}),\cdot,{}^T\rangle$, the set $\{g,0\}$ is
an ideal of this sub\sgp\ and is closed under transposition. It remains to observe that the Rees quotient of the \is\
$\langle\{e,f,g,0\},\cdot,{}^T\rangle$ over the ideal $\{g,0\}$ is isomorphic to the 3-element twisted semilattice $\TSL$.

Now let $n=2$ and let the number of elements in $\mathcal{K}$ be not of the form $4k+3$. Then the field $\mathcal{K}$ contains a square
root of ${-1}$, see, e.g., \cite[Theorem~3.75]{LidlNiederreiter}. Now the argument of the previous paragraph applies, with the $2\times
2$-matrices
$$e'=\begin{pmatrix}
1 & 0 \\
x & 0
\end{pmatrix},\ \ f'=
\begin{pmatrix}
1 & x\\
0 & 0
\end{pmatrix},\ \text{ and }\
g'=
\begin{pmatrix}
1 & x \\
x & -1
\end{pmatrix}$$
in the roles of $e,f$, and $g$, respectively, where $x$ denotes some fixed square root of $-1$.
\end{proof}

We could have continued along the same lines to show that in fact all examples of INFB \iss\ found in~\cite{Dolinka:2010,ADV:2012} can be
similarly deduced from Theorem~\ref{twisted}. However we think that Corollaries~\ref{twisted Brandt} and~\ref{matrices} are representative
enough. Now we present a new application.

Let $\mathcal{K}$ be a finite field and let $\mathrm{T}_n(\mathcal{K})$ stand for the set of all upper-triangular $n\times n$-matrices over
$\mathcal{K}$. The set $\mathrm{T}_n(\mathcal{K})$ forms an \is\ under the usual matrix multiplication and the skew transposition. The
following result classifies all INFB \iss\ of the form $\langle\mathrm{T}_n(\mathcal{K}),\cdot,{}^D\rangle$.

\begin{Thm}
\label{triangular} The \is\ $\langle\mathrm{T}_n(\mathcal{K}),\cdot,{}^D\rangle$, where $\mathcal{K}$ is a finite field, is \infb\ if and
only if $n\ge 4$ and $\mathcal{K}$ contains at least $3$ elements.
\end{Thm}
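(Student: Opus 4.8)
The plan is to detach the unary operation first and reduce to a statement about the plain semigroup reduct $\langle\mathrm{T}_n(\mathcal{K}),\cdot\rangle$. I would begin by verifying that the three-element twisted semilattice $\TSL$ lies in $\var\langle\mathrm{T}_n(\mathcal{K}),\cdot,{}^D\rangle$ for every $n\ge2$: writing $E_{ij}$ for the matrix units and fixing any nonzero $a\in\mathcal{K}$, the matrix $e=E_{11}+aE_{12}$ is an idempotent of $\mathrm{T}_n(\mathcal{K})$ whose skew transpose $e^D=E_{nn}+aE_{n-1,n}$ is again an idempotent, and $\TSL$ is a divisor of the (at most four-element) involutory subsemigroup $\langle e,e^D\rangle$: it is its Rees quotient over the involution-closed ideal $\{e\cdot e^D,0\}$, and in fact $\langle e,e^D\rangle=\{e,e^D,0\}\cong\TSL$ already when $n\ge4$, since then $e\cdot e^D=e^D\cdot e=0$. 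Thus $\TSL$-membership holds throughout the range and is never the obstruction. Combining this with Theorem~\ref{twisted} and Lemma~\ref{easy}, the unary semigroup $\langle\mathrm{T}_n(\mathcal{K}),\cdot,{}^D\rangle$ is INFB if and only if its reduct $\langle\mathrm{T}_n(\mathcal{K}),\cdot\rangle$ is, so the theorem becomes the purely plain assertion that $\langle\mathrm{T}_n(\mathcal{K}),\cdot\rangle$ is INFB exactly when $n\ge4$ and $|\mathcal{K}|\ge3$. That assertion is what I would establish next.

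It is worth noting that the convenient shortcut of exhibiting a known ``small'' INFB divisor is not available: the shift matrix $E_{12}+E_{23}+\cdots+E_{n-1,n}$ is a nilpotent of index $n$ in $\mathrm{T}_n(\mathcal{K})$, so for $n\ge3$ the reduct $\langle\mathrm{T}_n(\mathcal{K}),\cdot\rangle$ satisfies no identity $x^k=x^\ell$ with $k<n$; since the Brandt monoid $B_2^1$ and the matrix semigroups $\mathrm{M}_2(\mathcal{K}')$ do satisfy such an identity (with $k=2$), none of them lies in $\var\langle\mathrm{T}_n(\mathcal{K}),\cdot\rangle$, and the INFB property of the reduct must be checked intrinsically. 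For the ``if'' direction ($n\ge4$, $|\mathcal{K}|\ge3$) I would follow the pattern of Sapir's proof that $\mathrm{M}_n(\mathcal{K})$ is INFB (\cite[Corollary~6.2]{sapirburnside}) and show, via the sufficiency part of Sapir's criterion (the plain counterpart of Theorem~\ref{Theorem 2.2}), that every Zimin word $Z_m$ is an isoterm for $\langle\mathrm{T}_n(\mathcal{K}),\cdot\rangle$: given a word $z\ne Z_m$, one builds from the matrix units and a fixed field element $c\in\mathcal{K}\setminus\{0,1\}$ a substitution of upper-triangular matrices under which $Z_m$ and $z$ take different values. The two hypotheses should enter transparently --- four diagonal positions to host the recursive Zimin gadget, and a unit $c\ne1$ to destroy the degeneracy that $\mathbb{F}_2$ would otherwise impose.

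For the ``only if'' direction I would prove that $\langle\mathrm{T}_n(\mathcal{K}),\cdot\rangle$ is not INFB whenever $n\le3$ or $|\mathcal{K}|=2$; by Lemma~\ref{easy} this immediately gives that $\langle\mathrm{T}_n(\mathcal{K}),\cdot,{}^D\rangle$ is not INFB in these cases. By the necessity part of Sapir's criterion (\cite[Proposition~7]{sapirburnside}) it suffices to exhibit, for each such $n$ and $\mathcal{K}$, a nontrivial semigroup identity of the form $Z_m=z$ satisfied by $\langle\mathrm{T}_n(\mathcal{K}),\cdot\rangle$. For $n\le3$ --- the case $n=1$ being trivial, as $\langle\mathrm{T}_1(\mathcal{K}),\cdot\rangle$ is commutative --- the scarcity of diagonal positions should force such a collapse already at small $m$, uniformly in $\mathcal{K}$; for $\mathcal{K}=\mathbb{F}_2$ the absence of any unit other than $1$ should produce the required identity uniformly in $n$. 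A single explicit Zimin identity ought to suffice in each of these situations.

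The routine ingredients --- the $\TSL$-construction and the two invocations of Theorem~\ref{twisted} and Lemma~\ref{easy} --- are immediate, and the bookkeeping around the numerical thresholds should fall out once the right combinatorial gadget is identified. The genuine difficulty, and the step I expect to be the main obstacle, is the intrinsic analysis of the plain semigroup $\langle\mathrm{T}_n(\mathcal{K}),\cdot\rangle$: designing, for $n\ge4$ and $|\mathcal{K}|\ge3$, the matrix substitutions that certify all Zimin words as isoterms, and, dually, designing the collapsing Zimin identities for $\langle\mathrm{T}_n(\mathbb{F}_2),\cdot\rangle$ and for $\langle\mathrm{T}_3(\mathcal{K}),\cdot\rangle$, in each case with the required uniformity in the remaining parameter.
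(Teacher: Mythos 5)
Your reduction is exactly the one the paper uses: establish $\TSL\in\var\langle\mathrm{T}_n(\mathcal{K}),\cdot,{}^D\rangle$, then combine Theorem~\ref{twisted} with Lemma~\ref{easy} to convert the theorem into the purely plain statement that $\langle\mathrm{T}_n(\mathcal{K}),\cdot\rangle$ is \infb\ if and only if $n\ge 4$ and $|\mathcal{K}|\ge 3$. Your $\TSL$ witness is correct but needlessly decorated: the bare matrix units $e_{11}$ and $e_{nn}$ already satisfy $e_{11}e_{nn}=e_{nn}e_{11}=0$ and $e_{11}^D=e_{nn}$ for every $n\ge2$, so $\{e_{11},e_{nn},0\}$ is an involutory subsemigroup isomorphic to $\TSL$ on the nose, with no Rees quotient needed. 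Your side remark that no small INFB divisor such as $B_2^1$ is available is also consistent with the paper (which notes, citing the same source as below, that $B_2^1$ does not divide $\langle\mathrm{T}_n(\mathcal{K}),\cdot\rangle$).

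The genuine gap is the plain-semigroup classification itself. The assertion that $\langle\mathrm{T}_n(\mathcal{K}),\cdot\rangle$ is \infb\ exactly when $n\ge4$ and $|\mathcal{K}|\ge3$ is not routine bookkeeping around Sapir's criterion: it is the main theorem of Goldberg and Volkov~\cite{Goldberg&Volkov:2003}, which is precisely what the paper cites at this point, and which is a substantial piece of work in its own right (in particular, the identities witnessing failure of the Zimin-isoterm property for $n\le3$ and for $\mathcal{K}=\mathbb{F}_2$, and the substitutions certifying all Zimin words as isoterms when $n\ge4$ and $|\mathcal{K}|\ge3$, are the entire content of that paper). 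Your proposal correctly identifies this as ``the main obstacle'' but then leaves it entirely unexecuted --- every step of that half is phrased as ``should force'', ``ought to suffice'', ``one builds''. As written, the proof is complete only modulo an uncited known theorem; either cite \cite{Goldberg&Volkov:2003} for the reduct classification (which finishes the argument and recovers the paper's proof verbatim), or accept that the intrinsic analysis of $\langle\mathrm{T}_n(\mathcal{K}),\cdot\rangle$ still has to be supplied and constitutes the bulk of the missing work.
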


\begin{proof}
In~\cite{Goldberg&Volkov:2003} it is shown that the reduct $\langle\mathrm{T}_n(\mathcal{K}),\cdot\rangle$ is INFB if and only if $n\ge 4$
and $\mathcal{K}$ contains at least $3$ elements. Therefore, the `only if' part of our theorem follows from Lemma~\ref{easy} and the `if'
part will follow from Theorem~\ref{twisted} as soon as we shall verify that
$\TSL\in\var\langle\mathrm{T}_n(\mathcal{K}),\cdot,{}^D\rangle$. Indeed, for every $n\ge2$ the matrix units $e_{11}$ and $e_{nn}$ belong to
$\mathrm{T}_n(\mathcal{K})$ and satisfy
$$e_{11}^2=e_{11},\ e_{nn}^2=e_{nn},\ e_{11}e_{nn}=e_{nn}e_{11}=0,\ e_{11}^D=e_{nn},\ \text{ and } e_{nn}^D=e_{11}.$$
Hence the set $\{e_{11},e_{nn},0\}$ forms an involutory sub\sgp\ in $\langle\mathrm{T}_n(\mathcal{K}),\cdot,{}^D\rangle$ and this
involutory sub\sgp\ is isomorphic to the 3-element twisted semilattice $\TSL$.
\end{proof}

Observe that in~\cite{Goldberg&Volkov:2003} it is shown that for any $n$ and $\mathcal{K}$, the Brandt monoid $\langle B_2^1,\cdot\rangle$
does not belong to the \sgp\ variety generated by the \sgp\ $\langle\mathrm{T}_n(\mathcal{K}),\cdot\rangle$. Hence the twisted Brandt
monoid $\TB$ does not belong to the \is\ variety $\var\langle\mathrm{T}_n(\mathcal{K}),\cdot,{}^D\rangle$. Thus, Theorem~\ref{triangular}
provides a series of examples of INFB \iss\ whose varieties do not contain $\TB$. Such examples have not been known before.

\section{Regular semigroups}
\label{sec:regular}

In this section we show that the presence of the 3-element twisted semilattice $\TSL$ in the variety generated by a \fis\ $\mathcal{S}$ is
(not only sufficient but also) necessary for $\mathcal{S}$ in order to inherit the property of being INFB  from its \sgp\ reduct, provided
that $\mathcal{S}$ is regular. As a preliminary result we present a criterion of whether or not $\TSL$ belongs to $\var\mathcal{S}$
(Corollary~\ref{TSLinvarS}).

We shall use two classical results concerning Green's relations, the first of which is often referred
to as the Lemma of Miller and Clifford (see \cite[Proposition~2.3.7]{how}), while the property formulated
in the second one is usually called the \emph{stability} of Green's relations (see \cite[Proposition~3.1.4 (2)]{pin}).

\begin{Lemma} \label{Miller&Clifford}
\begin{enumerate}
\item Let $a,b$ be elements of a $\mathscr{D}$-class of an arbitrary \sgp. Then $ab\in R_a\cap L_b$ if and only if
$L_a\cap R_b$ contains an idempotent.
\item Let $S$ be a finite semigroup and $a,b\in S$. Then $a\Jc ab$ implies $a\Rc ab$ and $b\Jc ab$ implies $b \Lc ab$.
\end{enumerate}
\end{Lemma}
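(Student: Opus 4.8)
The plan is to obtain both parts directly from the definitions of Green's relations, the only non-trivial ingredient (besides finiteness, needed in part~(2)) being the elementary fact that an idempotent acts as a left identity on its own $\Rc$-class and as a right identity on its own $\Lc$-class.

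For part~(1) I would prove the two implications separately, working throughout in $S^1$. For the \emph{if} direction, suppose $e\in L_a\cap R_b$ is idempotent. Then $ae=a$ (since $a\Lc e$ and $e^2=e$) and $eb=b$ (since $b\Rc e$ and $e^2=e$); writing $e=ua$ and $e=bq$ with $u,q\in S^1$ we get $b=eb=u(ab)\in S^1(ab)$ and $a=ae=(ab)q\in(ab)S^1$, whence $ab\Rc a$ and $ab\Lc b$, that is, $ab\in R_a\cap L_b$. For the \emph{only if} direction, suppose $ab\in R_a\cap L_b$ and choose $s,t\in S^1$ with $sab=b$ and $abt=a$. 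The candidate idempotent is $e:=bt$; multiplying $abt=a$ on the left by $s$ shows $e=bt=sa$, and then $e^2=(sa)(bt)=s(abt)=sa=e$, while $ae=(ab)t=a$ and $eb=s(ab)=b$. From $e=sa$ together with $ae=a$ we read off $e\Lc a$, and from $e=bt$ together with $eb=b$ we read off $e\Rc b$, so $e$ is an idempotent in $L_a\cap R_b$. The assumption that $a$ and $b$ lie in one $\mathscr{D}$-class is in fact automatic in both directions and serves only to make the statement non-vacuous.

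For part~(2) I would use finiteness to produce an idempotent power. Suppose $a\Jc ab$, so $a=u(ab)v$ for some $u,v\in S^1$, and put $t=bv$; then $a=uat$, hence $a=u^{n}at^{n}$ for every $n\ge1$ by an immediate induction. Since $S^1$ is finite there is $k\ge1$ with $t^{k}=t^{2k}$, and right-multiplying $a=u^{k}at^{k}$ by $t^{k}$ yields $at^{k}=u^{k}at^{2k}=u^{k}at^{k}=a$; rewriting $a=at^{k}=a(bv)^{k}=(ab)\bigl(v(bv)^{k-1}\bigr)$ gives $a\in(ab)S^1$, which together with the trivial $ab\in aS^1$ shows $a\Rc ab$. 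The implication $b\Jc ab\Rightarrow b\Lc ab$ is the mirror image: from $b=u(ab)v$ set $s=ua$, note $b=sbv$ and hence $b=s^{n}bv^{n}$, choose $k\ge1$ with $s^{k}$ idempotent, deduce $s^{k}b=b$, and rewrite this as $b=(s^{k-1}u)(ab)\in S^1(ab)$.

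I do not expect a real obstacle here. In part~(1) the only thing to spot is the correct idempotent $e=bt=sa$; once it is on the table everything is a two-line verification. In part~(2) the single delicate point is that the idempotent power can be chosen with exponent $k\ge1$, which is exactly what permits the factor $ab$ to be split off; the remainder is the usual mild bookkeeping with $S^1$ versus $S$. Both statements are entirely classical, so one may alternatively simply cite \cite[Proposition~2.3.7]{how} and \cite[Proposition~3.1.4(2)]{pin}.
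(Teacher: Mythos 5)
Your proof is correct, but it is worth noting that the paper itself offers no proof of this lemma at all: both parts are classical facts that the authors simply import by citation, part (1) from Howie \cite[Proposition~2.3.7]{how} (the Lemma of Miller and Clifford) and part (2) from Pin \cite[Proposition~3.1.4(2)]{pin} (stability of finite semigroups). Your self-contained argument is the standard one and checks out in every detail. In part (1), the identification $e=bt=sa$ obtained by hitting $abt=a$ with $s$ on the left is exactly the right move, and the verifications $e^2=s(abt)=sa=e$, $ae=a$, $eb=b$ do place $e$ in $L_a\cap R_b$; your remark that the common-$\mathscr{D}$-class hypothesis is automatic in both directions is also accurate. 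In part (2), writing $a=u(ab)v=uat$ with $t=bv$, iterating to $a=u^ka t^k$, and using an idempotent power $t^k=t^{2k}$ to get $at^k=a$ and hence $a=(ab)\bigl(v(bv)^{k-1}\bigr)\in(ab)S^1$ is precisely the textbook proof of stability; the dual case is handled symmetrically. What your approach buys over the paper's bare citation is a short, elementary, and fully self-contained verification; what it costs is a paragraph of routine bookkeeping that the authors evidently preferred to outsource. Either route is acceptable here.
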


The above mentioned criterion for membership of $\TSL$ in $\var\mathcal{S}$ is clarified by the following key result.
\begin{Prop}\label{alternative} For a finite involutory semigroup $\mathcal{S}$ exactly one of the two following assertions is true.
\begin{enumerate}
\item[(A)] There exists an idempotent $e$ of $\mathcal{S}$ satisfying $e\mathrel{{>}_{\!\!\!\Jc}} e^*e$.
\item[(B)] There exists a positive integer $N$ such that  $\mathcal{S}$ satisfies the identity
\begin{equation}\label{identityfor(B)}
x^N=(x^N(x^N)^*)^Nx^N.
\end{equation}
\end{enumerate}
\end{Prop}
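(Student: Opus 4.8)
The plan is to split the dichotomy into two claims: first that (A) and (B) cannot hold simultaneously, and second that if (A) fails then (B) holds; together these give the ``exactly one''.

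For the incompatibility, suppose (B) holds with exponent $N$ and let $e$ be any idempotent of $\mathcal{S}$. Since $e^N=e$ we have $(e^*)^N=(e^N)^*=e^*$ and $\bigl((e^*)^N\bigr)^*=e$, so substituting $x=e^*$ in~\eqref{identityfor(B)} gives $e^*=(e^*e)^Ne^*$, whence $e^*\leq_{\mathscr{R}}e^*e$. Together with the trivial $e^*e=e^*\cdot e\leq_{\mathscr{R}}e^*$ this yields $e^*\Rc e^*e$, in particular $e^*\Jc e^*e$. Now $S^*=S$, so the involution preserves the $\mathscr{J}$-preorder; since $(e^*e)^*=e^*e$, the relation $e>_{\mathscr{J}}e^*e$ would force $e^*>_{\mathscr{J}}e^*e$, contradicting $e^*\Jc e^*e$. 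Hence no idempotent witnesses (A) when (B) holds.

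For the implication ``(A) fails $\Rightarrow$ (B)'', I would first work out the Green structure around a fixed idempotent $e$ and its mirror $e^*$. Since $e^*e=e^*\cdot e\leq_{\mathscr{L}}e$ always, failure of (A) gives $e\Jc e^*e$, which stability (Lemma~\ref{Miller&Clifford}(2)) upgrades to $e\Lc e^*e$; applying the involution, $e^*\Rc e^*e$. Running the same argument for the idempotent $e^*$ gives $e^*\Lc(e^*)^*e^*=ee^*$ and then, via the involution, $e\Rc ee^*$. Thus $e^*e\in R_{e^*}\cap L_e$ and $ee^*\in R_e\cap L_{e^*}$, and in particular $e\mathrel{\mathscr{D}}e^*$. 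Now the Lemma of Miller and Clifford (Lemma~\ref{Miller&Clifford}(1)), applied to the pair $(e^*,e)$, turns ``$e^*e=e^*\cdot e\in R_{e^*}\cap L_e$'' into ``$L_{e^*}\cap R_e$ contains an idempotent''. Since $ee^*$ also lies in $R_e\cap L_{e^*}$, that idempotent belongs to $H_{ee^*}$, so $ee^*$ sits inside a subgroup of $\mathcal{S}$; consequently every power of $ee^*$ is $\Hc$-related, hence $\Rc$-related, to $ee^*$, and therefore to $e$.

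To finish I would fix once and for all an exponent $N$ with $y^N=y^{2N}$ for every $y\in S$ (such $N$ exists since $S$ is finite; e.g.\ $N=|S|!$). Then $(ee^*)^N$ is an idempotent that is $\Rc$-related to $e$, hence a left identity for $e$, so $(ee^*)^Ne=e$. For an arbitrary $x\in S$ set $e:=x^N$, which is idempotent by the choice of $N$; the previous equation now reads $x^N=(x^N(x^N)^*)^Nx^N$, so $\mathcal{S}$ satisfies~\eqref{identityfor(B)} and (B) holds. The step I expect to be the main obstacle is the Green-relations analysis in the second claim: one has to recognise that ``(A) fails'' is exactly what places $e^*e$ and $ee^*$ into ``opposite'' cells of one $\mathscr{D}$-class, and then to use Miller--Clifford in the direction ``hypothesis $ab\in R_a\cap L_b$, conclusion an idempotent in $L_a\cap R_b$'' so as to trap $ee^*$ in a group $\Hc$-class --- which is precisely what collapses the right-hand side of~\eqref{identityfor(B)}.
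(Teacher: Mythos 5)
Your proof is correct and follows essentially the same route as the paper's: failure of (A) plus stability gives $e\Lc e^*e$ and (via the involution and the swap $e\leftrightarrow e^*$) $e\Rc ee^*\Lc e^*\Rc e^*e$, Miller--Clifford then places an idempotent in $H_{ee^*}=R_e\cap L_{e^*}$ making it a group, and taking $N$ with $y^N=y^{2N}$ yields $(x^N(x^N)^*)^Nx^N=x^N$. The only difference is that you spell out the mutual exclusivity of (A) and (B) in detail (substituting $x=e^*$ into the identity and using that the involution preserves the $\Jc$-order and fixes $e^*e$), a step the paper simply declares to be clear.
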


\begin{proof} It is clear that the conditions (A) and (B) exclude each other. Let us assume that the assertion (A) does not hold
for $\mathcal{S}=\langle S,\cdot,{}^*\rangle$. We have to prove that $\mathcal{S}$ satisfies (B). For each idempotent $e$ of $\mathcal{S}$
we have $e\Jc e^*e$ and therefore $e\Lc e^*e$ by Lemma~\ref{Miller&Clifford}~(2). Since the involution ${}^*$ is an anti-automorphism, we
also have $e^*\Rc e^*e$ for each idempotent $e$. Swapping the roles of $e$ and $e^*$ we also get $e\Rc ee^*\Lc e^*$. In other words,
$$e\Rc ee^*\Lc e^*\Rc e^*e\Lc e$$ holds for each idempotent $e$ of $\mathcal{S}$.

By the `only if' part of Lemma~\ref{Miller&Clifford} (1), the fact that the product $e^*e$ belongs to $L_e\cap R_{e^*}$ implies that the
$\mathscr{H}$-class $H_{ee^*}=R_e\cap L_{e^*}$ contains an idempotent $g$ and hence, by Green's theorem~\cite[Theorem~2.2.5]{how}, this
class is a subgroup of $\langle S,\cdot\rangle$ having $g$ as its identity element. Since $g\Rc e$ we have that $ge=e$. Now take any common
multiple $n$ of the exponents of all subgroups of $\mathcal{S}$; then $(ee^*)^n=g$ and hence $(ee^*)^ne=e$.  Finally, choose a positive
integer $N$ for which $\mathcal{S}$ satisfies the identity $x^N=x^{2N}$. Then $N$ is a common multiple of the exponents of all subgroups of
$\mathcal{S}$ and each element of the form $x^N$ is idempotent. Consequently $\mathcal{S}$ satisfies the identity \eqref{identityfor(B)}.
\end{proof}

It is now easy to see that $\TSL\in\var\mathcal{S}$ if and only if $\mathcal{S}$ is of type (A). Indeed suppose
that $\mathcal{S}$ has an idempotent $e$ satisfying $e\mathrel{{>}_{\!\!\!\Jc}} e^*e$ and let $\mathcal{T}=\langle T,\cdot,{}^*\rangle$
be the involutory subsemigroup of $\mathcal{S}$ generated by $e$; then $e\ne e^*$ and neither of the idempotents $e$ and $e^*$
is contained in the ideal $I:=Tee^*T\cup Te^*eT$. It follows that $\TSL$ is isomorphic to the Rees quotient $\mathcal{T}/I$.
In other words, $\TSL$ is a homomorphic image of an involutory subsemigroup of $\mathcal{S}$, that is, $\TSL$ \emph{divides}
$\mathcal{S}$ and in particular $\TSL\in \var\mathcal{S}$.

Conversely, if $\mathcal{S}$ is of type (B) then  $\mathcal{S}$ satisfies the identity (\ref{identityfor(B)})
 for some positive integer $N$. Obviously, $\TSL$ does not satisfy this identity and, hence, it does not belong to $\var\mathcal{S}$.

Altogether we have proved:
\begin{Cor}\label{TSLinvarS} For a finite involutory semigroup $\mathcal{S}$, the following are equivalent:
\begin{enumerate}
\item $\mathcal{S}$ is of type (A).
\item $\TSL$ divides $\mathcal{S}$.
\item $\TSL\in \var\mathcal{S}$.
\end{enumerate}
\end{Cor}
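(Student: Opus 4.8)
The plan is to establish the cycle of implications $(1)\Rightarrow(2)\Rightarrow(3)\Rightarrow(1)$, each of which has essentially been argued in the discussion immediately preceding the corollary; the task is to reassemble those paragraphs into a clean three-step proof. First I would prove $(1)\Rightarrow(2)$. Assuming $\mathcal{S}$ is of type (A), fix an idempotent $e$ with $e\mathrel{{>}_{\!\!\!\Jc}}e^*e$ and let $\mathcal{T}=\langle T,\cdot,{}^*\rangle$ be the involutory subsemigroup of $\mathcal{S}$ generated by $e$. The key observations are that $e\ne e^*$ (otherwise $e^*e=e$, contradicting $e\mathrel{{>}_{\!\!\!\Jc}}e^*e$), and that the set $I:=Tee^*T\cup Te^*eT$ is an ideal of $\mathcal{T}$ closed under $^*$ (since $(ee^*)^*=ee^*$ and $(e^*e)^*=e^*e$). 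Because $e\mathrel{{>}_{\!\!\!\Jc}}e^*e$ in $\mathcal{S}$ — and hence, by stability, $e>_{\!\!\Jc}ee^*$ as well after swapping roles — neither $e$ nor $e^*$ lies in $I$. One then checks that $\mathcal{T}/I$ has exactly three elements $e,e^*,0$ with $e^2=e$, $(e^*)^2=e^*$, all other products zero, and the involution swapping $e$ and $e^*$; that is precisely $\TSL$. Thus $\TSL$ is a homomorphic image of an involutory subsemigroup of $\mathcal{S}$, i.e., $\TSL$ divides $\mathcal{S}$.

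Next, $(2)\Rightarrow(3)$ is immediate from the HSP-theorem: if $\TSL$ divides $\mathcal{S}$, then $\TSL$ is a homomorphic image of a subalgebra of $\mathcal{S}$, so $\TSL\in\var\mathcal{S}$. Finally, for $(3)\Rightarrow(1)$ I would argue by contraposition using Proposition~\ref{alternative}: if $\mathcal{S}$ is not of type (A), then by that proposition it is of type (B), so there is a positive integer $N$ with $\mathcal{S}$ satisfying the identity~\eqref{identityfor(B)}. Every variety containing $\mathcal{S}$, in particular $\var\mathcal{S}$, then satisfies this identity as well. But $\TSL$ does not: substituting $x=e$ gives $e^N=e$ on the left (as $e$ is idempotent), while on the right $e^N(e^N)^*=ee^*=0$, so the right-hand side is $0\ne e$. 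Hence $\TSL\notin\var\mathcal{S}$, contradicting~(3); therefore (3) forces type (A).

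I do not expect any serious obstacle — the mathematical content is entirely contained in Proposition~\ref{alternative} and the paragraphs following it, so the corollary is genuinely a bookkeeping exercise. The one point that deserves a careful word is the verification in $(1)\Rightarrow(2)$ that the Rees quotient $\mathcal{T}/I$ is really isomorphic to $\TSL$ rather than possibly collapsing further or containing extra elements; concretely, one must confirm that $e$ and $e^*$ are distinct modulo $I$ and that $ee^*$, $e^*e$ (and all longer products of $e$ and $e^*$) already lie in $I$, which is where the strictness of $e\mathrel{{>}_{\!\!\!\Jc}}e^*e$ is used. Everything else is a direct translation of the surrounding text into the implication format.
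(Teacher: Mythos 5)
Your proposal is correct and follows essentially the same route as the paper: (1)$\Rightarrow$(2) via the Rees quotient of the involutory subsemigroup generated by $e$ over the ideal $Tee^*T\cup Te^*eT$, (2)$\Rightarrow$(3) by the HSP-theorem, and (3)$\Rightarrow$(1) by contraposition through Proposition~\ref{alternative} and the failure of the identity \eqref{identityfor(B)} in $\TSL$ under the substitution $x\mapsto e$. The only step stated a little too quickly is the claim that $e\notin Tee^*T$ ``by stability'': one in fact needs both stability and Lemma~\ref{Miller&Clifford}(1) to rule out $e\Jc ee^*$ (since that relation would force $e\Jc e^*e$), but the paper glosses over this detail as well, so the two arguments match in substance and in level of detail.
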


A finite involutory semigroup $\mathcal{S}$ of type (A) with INFB semigroup reduct $\langle S,\cdot\rangle$ is INFB as
an involutory semigroup (Theorem \ref{sufficientINFB}). At the time of writing, the authors were not (yet) aware of an
example of an INFB involutory semigroup of type (B).

We note that finite involutory semigroups of type (B) can be characterized in various ways; for example, as those in which each regular
element admits a Moore--Penrose inverse, and likewise, as those in which each regular ${\Lc}$-class (and/or each regular ${\Rc}$-class)
contains a \emph{projection} (that is, an idempotent fixed under the involution) \cite{NP}. Another equivalent condition is that each
involutory subsemigroup $\langle g\rangle$ generated by a single idempotent $g$ is completely simple. Moreover, the class of all finite
involutory semigroups of type (B) forms a pseudovariety of involutory semigroups, namely the one defined by the pseudoidentity
$$x^\omega=(x^\omega(x^\omega)^*)^\omega x^\omega.$$
As usual, $s\mapsto s^\omega$ denotes the unary operation that assigns to each element $s$ of a finite semigroup the unique idempotent in
the cyclic subsemigroup generated by $s$.

Recall that an element $x$ of a \sgp\ $\langle S,\cdot\rangle$ is said to be \emph{regular} if $x=xyx$ for some $y\in S$. A [unary] \sgp\
is \emph{regular} if all of its elements are regular. We shall refine the proof of Proposition \ref{alternative} and show that a
\textbf{regular} involutory semigroup $\mathcal{S}$ of type (B) satisfies an identity that guarantees that $\mathcal{S}$ is \textbf{not}
INFB, thanks to the following result from~\cite{ADV:2012}.

\begin{Prop}[{\mdseries\cite[Proposition~2.9]{ADV:2012}}]
\label{NINFB} Let $\mathcal{S}=\langle S,\cdot,{}^*\rangle$ be a \fis. If there exists an involutory word $\iota\!(x)$ in one variable $x$ such
that $\mathcal{S}$ satisfies the identity $x=x\iota\!(x)x$, then $\mathcal{S}$ is not \infb.
\end{Prop}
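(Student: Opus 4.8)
The plan is to exhibit a \fb, locally finite variety of \iss\ that contains $\mathcal S$; by definition this certifies that $\mathcal S$ is not \infb. One cannot reduce to the \sgp\ reduct here: every inverse \sgp, viewed with $x\mapsto x^{-1}$ as its unary operation, satisfies $x=xx^*x$, and concretely $\langle B_2^1,\cdot,{}^T\rangle$ satisfies $x=xx^*x$ and is not \infb\ even though its reduct $\langle B_2^1,\cdot\rangle$ is. So the contrapositive of Lemma~\ref{easy} is of no help and the argument must live in the unary signature. (There is a cheap sub-case: if $\iota(x)$ happens to be a word in $x$ alone, say $\iota(x)=x^k$, then $\mathcal S$ satisfies the plain identity $x=x^{k+2}$, the Zimin word $Z_1$ is then not an isoterm for the reduct, so by Sapir's characterisation \cite{Sapir:1987} of \infb\ \sgps\ the reduct --- hence also $\mathcal S$, by Lemma~\ref{easy} --- is not \infb. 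The construction below does not need this dichotomy and handles every $\iota$.)

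First I would extract the combinatorial content of the hypothesis. Putting $x\mapsto x_1$ in $x=x\iota(x)x$ gives the \is\ identity $x_1=x_1\iota(x_1)x_1$, which is non-trivial because $\iota(x_1)$ is a non-empty involutory word, so the right-hand side is a strictly longer, hence different, element of $\FI(\{x_1\})$. What matters about this identity is not merely that $Z_1$ thereby fails to be an involutory isoterm (so Theorem~\ref{Theorem 2.2} does not apply), but that it is a \emph{contracting} rewriting rule whose left side is a single variable: in any \is\ satisfying it, a factor of the form $u\,\iota(u)\,u$ --- obtained from the rule by substituting an involutory word $u$ for $x$ --- can be replaced by the shorter word $u$. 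This is the engine of the proof, which otherwise follows the pattern of Sapir's argument \cite{Sapir:1987} for showing a finite \sgp\ not to be \infb.

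Concretely, fix $N$ with $\mathcal S\models x^N=x^{2N}$ (possible since $S$ is finite) and let $\mathbf V$ be the variety of \iss\ defined by the involution laws~\eqref{eq:invlution} together with all \is\ identities valid in $\mathcal S$ in which both sides have at most $g$ letter occurrences, where $g=g(N,|\iota(x)|,|S|)$ is a suitable explicit constant. This is a finite axiom system, so $\mathbf V$ is \fb, and $\mathcal S\in\mathbf V$. It remains to prove $\mathbf V$ locally finite, so take a \fg\ \is\ $\mathcal T=\langle A\rangle$ in $\mathbf V$ with $A$ finite; its reduct is generated by the finite alphabet $A\cup A^*$. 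Among the retained short identities are $x^N=x^{2N}$ and $x=x\iota(x)x$, and one argues, as is standard in such proofs, that every word over $A\cup A^*$ that is long enough can be shortened modulo $\mathbf V$: by the classical unavoidability of Zimin words over a finite alphabet, a sufficiently long word carries an occurrence of $Z_n$ (for a fixed $n$ chosen in terms of $g$) under some substitution of $x_1,\dots,x_n$ into $(A\cup A^*)^+$; one then reshapes this occurrence --- using the retained short identities of $\mathcal S$, in particular $x^N=x^{2N}$ --- into a factor of the shape $u\,\iota(u)\,u$ and contracts it to $u$. Since $\iota(x)$ lies in $\FI(\{x\})$, none of these rewritings introduces a letter outside $A\cup A^*$, so the whole combinatorics stays over the fixed finite alphabet and Sapir's length estimates carry over. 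This bounds the length of reduced words over $A\cup A^*$, whence $T$ is finite; so $\mathbf V$ is locally finite and $\mathcal S$ is not \infb.

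I expect the genuine difficulty to be precisely this last step, the local finiteness of $\mathbf V$ --- and inside it, the passage from a bare $Z_n$-occurrence to a factor of shape $u\,\iota(u)\,u$: one has to choreograph the power identity $x^N=x^{2N}$ against the contracting rule $x=x\iota(x)x$, and to tune $n$ (hence $g$) to $N$, $|\iota(x)|$ and $|S|$, so that the word really does get shorter. The starred letters in the generating alphabet $A\cup A^*$ ought to be harmless --- $\iota(x)$ is a word in the single variable $x$, so the rewriting never manufactures new variables, it only works over a larger but still finite alphabet --- yet this is the point one would need to check with most care.
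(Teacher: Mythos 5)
A preliminary remark: the paper does not prove Proposition~\ref{NINFB} at all --- it is imported from \cite[Proposition~2.9]{ADV:2012} --- so the comparison is with what any proof must accomplish rather than with an argument printed here. Your frame is the forced one (exhibit a \fb\ locally finite variety of \iss\ containing $\mathcal{S}$), your choice of $\mathbf{V}$ as the variety defined by all identities of $\mathcal{S}$ of bounded size is sound (if \emph{any} \fb\ locally finite variety contains $\mathcal{S}$, then one of this form does), and your ``cheap sub-case'' where $\iota(x)$ is star-free is correct via Sapir's theorem and the contrapositive of Lemma~\ref{easy}. But the entire content of the proposition is the local finiteness of $\mathbf{V}$, and the mechanism you propose for it does not work.

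The identity $x=x\iota(x)x$ shortens a word only when that word already contains a factor of the shape $u\,\iota(u)\,u$; read from the short side it is an \emph{expansion}. Zimin unavoidability hands you, inside a long word over $A\cup A^*$, factors of the shape $u_1u_2u_1$ and its iterates --- repeated occurrences of the \emph{same} block --- whereas $u\,\iota(u)\,u$ requires the interpolated block $\iota(u)$, which outside your cheap sub-case contains the starred reversal $u^*$ of $u$. Neither $x^N=x^{2N}$ nor any identity that $\mathcal{S}$ actually satisfies can manufacture $u^*$ between two occurrences of $u$ in a generic word, so the ``reshaping'' step is not a routine adaptation of Sapir's argument but an unproved (and, as described, false) claim. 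Worse, in precisely the interesting instances the reduct $\langle S,\cdot\rangle$ is \infb\ --- your own example $\langle B_2^1,\cdot,{}^T\rangle$ --- so by \cite[Proposition~7]{sapirburnside} \emph{all} Zimin words are plain isoterms for the reduct and no nontrivial plain identity of $\mathcal{S}$ touches a Zimin occurrence at all; the Sapir-style rewriting you appeal to is driven by a nontrivial identity $Z_n=w$, which is exactly what is unavailable here. (This is also why one cannot simply invoke a converse of Theorem~\ref{Theorem 2.2}: the failure of $Z_1$ to be an involutory isoterm does not by itself yield not-\infb, and no such converse is known in the unary setting.) A correct proof must exploit $x=x\iota(x)x$ structurally --- for instance, it forces every element $a$ of every member of $\mathbf{V}$ to be regular with $(a\iota(a))^N$ an idempotent acting as a local identity for $a$, as in the proof of Corollary~\ref{regular} --- rather than as a length-reducing rewriting rule triggered by Zimin occurrences. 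As it stands, the local finiteness of $\mathbf{V}$ is asserted, not proved.
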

We get the following consequence:
\begin{Cor}\label{regular}
A finite regular involutory semigroup $\mathcal{S}$ of type (B) is not inherently nonfinitely based.
\end{Cor}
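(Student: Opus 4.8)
The plan is to exhibit, for a finite regular \is\ $\mathcal{S}$ of type (B), an involutory word $\iota(x)$ in the single variable $x$ such that $\mathcal{S}\models x=x\iota(x)x$; Proposition~\ref{NINFB} then yields the conclusion. Choose a positive integer $N$ with $\mathcal{S}\models z^N=z^{2N}$, so that (exactly as in the proof of Proposition~\ref{alternative}) $N$ is a common multiple of the exponents of all subgroups of $\mathcal{S}$ and every element of the form $z^N$ is idempotent, and set $\iota(x)=(x^*x)^{N-1}x^*$. Since the equality $x\iota(x)x=x(x^*x)^N=(xx^*)^Nx$ holds formally in every \is, it suffices to prove that $\mathcal{S}$ satisfies the identity $x=(xx^*)^Nx$.

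To do this I would refine the proof of Proposition~\ref{alternative}. Fix $x\in S$; regularity gives $y$ with $x=xyx$, so that $e:=xy$ and $f:=yx$ are idempotents with $e\Rc x$, $f\Lc x$, $ex=xf=x$, and, applying the anti-automorphism ${}^*$, $e^*\Lc x^*$ and $f^*\Rc x^*$. As $\mathcal{S}$ is of type (B), condition (A) fails, so the reasoning in the proof of Proposition~\ref{alternative} is available for \emph{every} idempotent of $\mathcal{S}$. Applied to $e$ it produces the chain $e\Rc ee^*\Lc e^*$, shows that $H:=R_e\cap L_{e^*}$ is a subgroup of $\langle S,\cdot\rangle$ with identity $g=(ee^*)^N$, and gives $ge=e$; since $g\Rc e\Rc x$ and $g$ is idempotent, also $gx=x$, and moreover $R_e=R_x$, $L_{e^*}=L_{x^*}$, so $H=R_x\cap L_{x^*}$. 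Applied to $f$, the same reasoning gives $f\Rc ff^*\Lc f^*$, hence $f\mathrel{\mathscr{D}}f^*$, so that $x\Lc f\mathrel{\mathscr{D}}f^*\Rc x^*$ places $x$ and $x^*$ in one $\mathscr{D}$-class; and applied to the idempotent $f^*$, it shows that $R_{f^*}\cap L_f=L_x\cap R_{x^*}$ contains an idempotent. Now Lemma~\ref{Miller&Clifford}(1), taken with $a=x$ and $b=x^*$, forces $xx^*\in R_x\cap L_{x^*}=H$. Hence $xx^*$ lies in the finite group $H$, so $(xx^*)^N=g$, and therefore $x=gx=(xx^*)^Nx$. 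Since $x\in S$ was arbitrary, $\mathcal{S}$ satisfies $x=(xx^*)^Nx$, and Proposition~\ref{NINFB} completes the argument.

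The crux is the membership $xx^*\in R_x\cap L_{x^*}$: one has to locate idempotents in the $\mathscr{H}$-classes $L_x\cap R_{x^*}$ (to invoke Lemma~\ref{Miller&Clifford}(1)) and $R_x\cap L_{x^*}$ (to know that the latter is a group), and this is exactly the point where both hypotheses are used — regularity supplies the idempotents $e$ and $f$ flanking $x$, while the failure of (A) lets the machinery of Proposition~\ref{alternative} run on $e$, $f$ and $f^*$. The remaining ingredients — the formal identity $x(x^*x)^N=(xx^*)^Nx$ and the absorption $gx=x$ — are routine. (One could instead bypass the search for these idempotents by using the description of type (B) recorded after Corollary~\ref{TSLinvarS}, according to which every regular ${\Lc}$-class and every regular ${\Rc}$-class contains a projection.)
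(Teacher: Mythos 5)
Your proposal is correct and follows essentially the same route as the paper: use regularity to produce the idempotents $e=xy$ and $f=yx$, run the argument from the proof of Proposition~\ref{alternative} on them (and on $f^*$) to locate an idempotent in $L_x\cap R_{x^*}$, apply Lemma~\ref{Miller&Clifford}(1) to conclude $xx^*\in R_x\cap L_{x^*}=H_{ee^*}$, deduce $x=(xx^*)^Nx$, and finish with Proposition~\ref{NINFB}. The only cosmetic differences are the choice of exponent ($N$ with $z^N=z^{2N}$ versus the least common multiple $n$ of the subgroup exponents) and the explicit remark that $x\mathrel{\mathscr{D}}x^*$, which the paper leaves implicit in the eggbox picture.
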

\begin{proof}
We are going to sharpen the proof of Proposition~\ref{alternative}. Let $\mathcal{S}$ be an  involutory semigroup of type (B) (not
necessarily regular at this point). Fix an arbitrary regular element $x\in S$  and take an element $y\in S$   such that $x=xyx$. Then
$e=xy$ and $f=yx$ are idempotents and $e\Rc x\Lc f$. Since the involution is an anti-automorphism of $\langle S,\cdot\rangle$, we also have
$e^*\Lc x^*\Rc f^*$. We have already seen in the proof of Proposition \ref{alternative} that $ee^*\Rc e\Lc e^*e\Rc e^*\Lc ee^*$ and
$ff^*\Rc f\Lc f^*f\Rc f^*\Lc ff^*$.  All listed relations are graphically represented in Fig.~\ref{fig:D-class} that shows an appropriate
fragment of the eggbox picture for the $\mathscr{D}$-class of $x$ and $x^*$.
\begin{figure}[th]
\begin{center}
{\large \begin{tabular}{|c|c|c|c|}

\hline $f^*f$\rule[-5pt]{0pt}{16pt} & & $f^*$ & $x^*$ \\
\hline \rule[-5pt]{0pt}{16pt}& $e^*e$  & & $e^*$\\
\hline $f$\rule[-5pt]{0pt}{16pt} & \phantom{$f^*f$} & $ff^*$ &  \phantom{$f^*f$}\\
\hline $x$\rule[-5pt]{0pt}{16pt} & $e$ & & $ee^*$\\
\hline
\end{tabular}}
\caption{A fragment of the eggbox picture for the $\mathscr{D}$-class of the elements $x$ and $x^*$}\label{fig:D-class}
\end{center}
\end{figure}

As in the proof of Proposition~\ref{alternative}, the $\mathscr{H}$-class $H_{ee^*}=R_e\cap L_{e^*}$ contains an idempotent $g$ and again
this class is a subgroup of $\langle S,\cdot\rangle$ having $g$ as its identity element. Observe that $R_e=R_x$ and $L_{e^*}=L_{x^*}$
whence $H_{ee^*}=R_x\cap L_{x^*}$. Also observe that $gx=x$ since $g$ is an idempotent and $g\Rc x$.

Similarly, $ff^*\in R_f\cap L_{f^*}$ implies that $H_{f^*f}=R_{f*}\cap L_f$ contains an idempotent. However, $R_{f^*}=R_{x^*}$ and
$L_f=L_x$. Now, by the `if' part of Lemma~\ref{Miller&Clifford} (1), the fact that $L_x\cap R_{x^*}=H_{f^*f}$ contains an idempotent implies
that the product $xx^*$ lies in $R_x\cap L_{x^*}=H_{ee^*}$. Let $n$ be the least common multiple of the exponents of the subgroups of $\langle
S,\cdot\rangle$. We then have $(xx^*)^n=g$ whence $x=gx=(xx^*)^nx$.

Consequently, if $\mathcal{S}$ is regular then $\mathcal{S}$ satisfies the identity $x=(xx^*)^nx$ and hence
$x=x\iota\!(x)x$ for $\iota\!(x)=x^*(xx^*)^{n-1}$, which by Proposition \ref{NINFB} implies that $\mathcal{S}$ is not INFB.
\end{proof}

We can now easily deduce various characterizations of regular INFB \iss.
\renewcommand{\labelenumi}{\emph{(\roman{enumi})}}
\begin{Cor}
\label{characterization} Let $\mathcal{S}=\langle S,\cdot,{}^*\rangle$ be a finite regular \is. Then the following are equivalent:
\begin{enumerate}
\item $\mathcal{S}$ is \infb;
\item the reduct $\langle S,\cdot\rangle$ is \infb\ and the $3$-element twisted semilattice $\TSL$ belongs to $\var\mathcal{S}$;
\item the reduct $\langle S,\cdot\rangle$ is \infb\ and there exists an idempotent $e$ satisfying $e\mathrel{{>}_{\!\!\!\Jc}} e^*e$;
\item all Zimin words are involutory isoterms for $\mathcal{S}$.
\end{enumerate}
\end{Cor}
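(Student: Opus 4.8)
The plan is to prove Corollary~\ref{characterization} by assembling the pieces already established in the excerpt into a cycle of implications, adding only the small amount of glue needed to close it. The key observations are: (1) for \emph{regular} involutory semigroups, being of type (A) and being of type (B) are exactly the two mutually exclusive alternatives of Proposition~\ref{alternative}, so INFB must force type (A) by Corollary~\ref{regular}; and (2) Corollary~\ref{TSLinvarS} already identifies type (A) with membership of $\TSL$ in $\var\mathcal{S}$ as well as with the existence of an idempotent $e$ with $e\mathrel{{>}_{\!\!\!\Jc}} e^*e$. Thus conditions (ii) and (iii) are literally equivalent for \emph{every} finite \is\ (not just regular ones), via Corollary~\ref{TSLinvarS}, and the real content is linking these to (i) and (iv).

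First I would record the equivalence of (ii) and (iii): by Corollary~\ref{TSLinvarS}, the conditions ``$\TSL\in\var\mathcal{S}$'' and ``$\mathcal{S}$ is of type (A)'' are equivalent, and type (A) is by definition the statement that there is an idempotent $e$ with $e\mathrel{{>}_{\!\!\!\Jc}} e^*e$; conjoining ``$\langle S,\cdot\rangle$ is INFB'' to both sides gives (ii)$\Leftrightarrow$(iii). Next, (ii)$\Rightarrow$(i) is immediate from Theorem~\ref{sufficientINFB} (= Theorem~\ref{twisted}): if the reduct is INFB and $\TSL\in\var\mathcal{S}$ then $\mathcal{S}$ is INFB. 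For (i)$\Rightarrow$(ii) I would argue in two steps. By Lemma~\ref{easy}, if $\mathcal{S}$ is INFB then the reduct $\langle S,\cdot\rangle$ is INFB, giving the first half of (ii). For the second half, suppose toward a contradiction that $\TSL\notin\var\mathcal{S}$; then by Corollary~\ref{TSLinvarS} (the contrapositive of (3)$\Rightarrow$(1)) $\mathcal{S}$ is not of type (A), hence by Proposition~\ref{alternative} it is of type (B). Since $\mathcal{S}$ is regular by hypothesis, Corollary~\ref{regular} applies and tells us $\mathcal{S}$ is not INFB, contradicting (i). Therefore $\TSL\in\var\mathcal{S}$, completing (i)$\Rightarrow$(ii).

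It remains to fold in (iv). The implication (iv)$\Rightarrow$(i) is exactly Theorem~\ref{Theorem 2.2} (= \cite[Theorem~2.3]{ADV:2012}): if all Zimin words are involutory isoterms for the \fis\ $\mathcal{S}$, then $\mathcal{S}$ is INFB; note this uses only finiteness, not regularity. For (i)$\Rightarrow$(iv), or more conveniently (ii)$\Rightarrow$(iv), I would reread the proof of Theorem~\ref{twisted}: that proof shows precisely that when the reduct is INFB and $\TSL\in\var\mathcal{S}$, the \is\ $\mathcal{S}$ satisfies no non-trivial \is\ identity of the form $Z_n=z$ — first ruling out plain $z$ via \cite[Proposition~7]{sapirburnside}, then ruling out $z$ with a starred letter via the substitution $e$ in $\TSL$. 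Since the property ``all Zimin words are involutory isoterms'' is by definition the assertion that no such non-trivial identity $Z_n=z$ holds, this is exactly condition (iv). Hence (ii)$\Rightarrow$(iv), and the cycle (i)$\Rightarrow$(ii)$\Leftrightarrow$(iii), (ii)$\Rightarrow$(iv)$\Rightarrow$(i) closes. I do not anticipate a genuine obstacle here; the only point requiring a little care is making explicit that the argument inside the proof of Theorem~\ref{twisted} is reversible in the sense that it yields (iv) and not merely (i) — i.e., that ``Zimin words are isoterms'' is the literal hypothesis of Theorem~\ref{Theorem 2.2} and is precisely what the computation in the proof of Theorem~\ref{twisted} verifies — and noting that (iv)$\Rightarrow$(i) and (ii)$\Rightarrow$(iv) do not use regularity, so that only the step (i)$\Rightarrow$(ii), through Corollary~\ref{regular}, genuinely needs $\mathcal{S}$ to be regular.

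\begin{proof}
The equivalence (ii)$\Leftrightarrow$(iii) holds for an arbitrary \fis\ and is immediate from Corollary~\ref{TSLinvarS}, since type (A) is by definition the existence of an idempotent $e$ with $e\mathrel{{>}_{\!\!\!\Jc}} e^*e$, while $\TSL\in\var\mathcal{S}$ is equivalent to $\mathcal{S}$ being of type (A).

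(ii)$\Rightarrow$(i): This is Theorem~\ref{sufficientINFB}.

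(i)$\Rightarrow$(ii): By Lemma~\ref{easy}, if $\mathcal{S}$ is INFB then so is the reduct $\langle S,\cdot\rangle$. It remains to show $\TSL\in\var\mathcal{S}$. Suppose not; then by Corollary~\ref{TSLinvarS} the semigroup $\mathcal{S}$ is not of type (A), hence by Proposition~\ref{alternative} it is of type (B). Since $\mathcal{S}$ is regular, Corollary~\ref{regular} shows that $\mathcal{S}$ is not INFB, a contradiction.

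(iv)$\Rightarrow$(i): This is Theorem~\ref{Theorem 2.2}.

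(ii)$\Rightarrow$(iv): Assume the reduct $\langle S,\cdot\rangle$ is INFB and $\TSL\in\var\mathcal{S}$. The argument given in the proof of Theorem~\ref{twisted} shows that under these hypotheses $\mathcal{S}$ satisfies no non-trivial \is\ identity of the form $Z_n=z$: the case when $z$ is a plain word is excluded by \cite[Proposition~7]{sapirburnside}, and the case when $z$ contains a starred letter is excluded by substituting the element $e$ of $\TSL$ for all plain letters. Hence every Zimin word $Z_n$ is an involutory isoterm for $\mathcal{S}$, which is condition (iv).

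Combining these implications, (i)$\Rightarrow$(ii)$\Leftrightarrow$(iii), (ii)$\Rightarrow$(iv)$\Rightarrow$(i), we conclude that (i)--(iv) are all equivalent.
\end{proof}
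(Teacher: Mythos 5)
Your proof is correct and follows essentially the same route as the paper: the same cycle (i)$\Rightarrow$(ii)$\Leftrightarrow$(iii), (ii)$\Rightarrow$(iv)$\Rightarrow$(i), using Lemma~\ref{easy}, Proposition~\ref{alternative}, Corollary~\ref{regular}, Corollary~\ref{TSLinvarS}, Theorem~\ref{Theorem 2.2}, and the observation that the proof of Theorem~\ref{twisted} actually establishes (iv) rather than merely (i). The only difference is your extra (and redundant) direct implication (ii)$\Rightarrow$(i), which is harmless.
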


\begin{proof}
(i) $\to$ (ii) follows from Lemma~\ref{easy}, Proposition \ref{alternative} and Corollary~\ref{regular}.

(ii) $\to$ (iv) has been verified in the course of the proof of Theorem~\ref{twisted}.

(iv) $\to$ (i) is Theorem~\ref{Theorem 2.2}.

(ii) $\leftrightarrow$ (iii) follows from Proposition \ref{alternative} and Corollary \ref{TSLinvarS}.
\end{proof}

We observe that the condition (iii) in Corollary~\ref{characterization} is algorithmically verifiable. Indeed, given a finite
regular \is\ $\mathcal{S}$, we can check whether or not its reduct is INFB by using Sapir's algorithm from~\cite{Sapir:1987},
and the condition on the idempotents is obviously decidable.

\begin{Cor}
There exists an algorithm which decides, when given a finite regular \is\ $\mathcal{S}$, whether or not $\mathcal{S}$ is \infb.
\end{Cor}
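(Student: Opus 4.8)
The plan is to assemble the algorithm by combining the tools already in place. Given a finite regular involutory semigroup $\mathcal{S}=\langle S,\cdot,{}^*\rangle$ as input (presented, say, by its multiplication table and the table of the unary operation), we invoke Corollary~\ref{characterization}: $\mathcal{S}$ is INFB if and only if condition (iii) holds, namely the reduct $\langle S,\cdot\rangle$ is INFB \emph{and} there exists an idempotent $e\in S$ with $e\mathrel{{>}_{\!\!\!\Jc}} e^*e$. So the algorithm splits into two independent subroutines, and the output is the conjunction of their answers.

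First I would describe the idempotent test. The set of idempotents of $\mathcal{S}$ is computed by scanning the multiplication table for elements $e$ with $e^2=e$; for each such $e$ we compute $e^*$ and then $e^*e$, and we must decide whether $e\mathrel{{>}_{\!\!\!\Jc}} e^*e$, i.e. whether $e^*e$ lies strictly below $e$ in the $\mathscr{J}$-order. Since $S$ is finite, the principal ideals $S^1eS^1$ and $S^1e^*eS^1$ are finite and directly computable from the multiplication table, so one checks $S^1e^*eS^1\subsetneq S^1eS^1$; equivalently one can compute the $\mathscr{J}$-classes outright. This is plainly decidable, and in fact polynomial in $|S|$.

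Second, the reduct test is exactly Sapir's algorithm from~\cite{Sapir:1987}, which decides, for an arbitrary finite semigroup given by its multiplication table, whether it is INFB. We apply it to $\langle S,\cdot\rangle$. Putting the two subroutines together: the algorithm answers ``$\mathcal{S}$ is INFB'' precisely when Sapir's algorithm returns ``yes'' on $\langle S,\cdot\rangle$ \emph{and} the idempotent search finds an $e$ with $e\mathrel{{>}_{\!\!\!\Jc}} e^*e$; otherwise it answers ``$\mathcal{S}$ is not INFB.'' Correctness is immediate from the equivalence (i)$\leftrightarrow$(iii) of Corollary~\ref{characterization}, and termination is clear since both subroutines terminate on finite input.

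There is essentially no obstacle here: the real content is in Corollary~\ref{characterization} (hence ultimately in Theorem~\ref{sufficientINFB}, Corollary~\ref{regular} and Proposition~\ref{alternative}) and in the decidability of the reduct problem, which is Sapir's theorem. The only point one might wish to spell out is that regularity of $\mathcal{S}$ as an involutory semigroup is itself checkable from the tables (for each $x$, search for $y$ with $xyx=x$), so one can even first verify that the input is of the required form; but since the statement already assumes $\mathcal{S}$ is a finite regular involutory semigroup, this is optional. The proof in the paper can therefore be a one- or two-sentence remark pointing at Corollary~\ref{characterization} and Sapir's algorithm, exactly as the sentence immediately preceding the corollary already anticipates.
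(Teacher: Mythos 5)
Your proposal is correct and follows exactly the paper's argument: the paper likewise reduces the question to condition (iii) of Corollary~\ref{characterization}, invoking Sapir's algorithm for the reduct and noting that the idempotent condition $e\mathrel{{>}_{\!\!\!\Jc}} e^*e$ is obviously decidable from the tables. Your write-up merely spells out the routine details that the paper leaves implicit.
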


\paragraph*{\textbf{Acknowledgement}.} The authors are indebted to an anonymous referee for several inspiring remarks that have led to an improved
presentation of Section 4.

\end{document}